\newtheorem{theorem}{Theorem}
\newtheorem{lemma}[theorem]{Lemma}
\newtheorem{corollary}[theorem]{Corollary}
\newtheorem{conjecture}[theorem]{Conjecture}
\newtheorem{proposition}[theorem]{Proposition}
\theoremstyle{definition}
\newtheorem{remark}[theorem]{Remark}
\theoremstyle{remark}
\newcommand{\R}{\mathbb{R}}
\newcommand{\Z}{\mathbb{Z}}
\newcommand{\N}{\mathbb{N}}
\newcommand{\PP}{\mathbb{P}}
\newcommand{\EE}{\mathbb{E}}
\newfont{\cmbsy}{cmbsy10}
\newfont{\cmmib}{cmmib10}
\newcommand{\Mod}[1]{\ (\mathrm{mod}\ #1)}
\DeclareMathOperator{\supp}{supp}
\newcommand{\Addresses}{{
		\footnotesize
		\bigskip
		\footnotesize
	    \textsc{Chennai Mathematical Institute, H1, SIPCOT IT Park, Siruseri, Kelambakkam 603103, India}\par\nopagebreak	
		\textsc{Department of Mathematics and Statistics, Queen's University, Kingston, Ontario, K7L 3N6, Canada}\par\nopagebreak
		\textit{E-mail addresses:}
		\texttt{bharadwaj.work@outlook.com,} \texttt{brad.w.rodgers@gmail.com}
		
}}
\title[Large prime factors]{Large prime factors of well-distributed sequences}
\author{Abhishek Bharadwaj, Brad Rodgers}
\date{}
\begin{document}
	\thispagestyle{empty}
	
	\begin{abstract}
		We study the distribution of large prime factors of a random element $u$ of arithmetic sequences satisfying simple regularity and equidistribution properties. We show that if such an arithmetic sequence has level of distribution $1$ the large prime factors of $u$ tend to a Poisson-Dirichlet process, while if the sequence has any positive level of distribution the correlation functions of large prime factors tend to a Poisson-Dirichlet process against test functions of restricted support. For sequences with positive level of distribution, we also estimate the probability the largest prime factor of $u$ is greater than $u^{1-\epsilon}$, showing that this probability is $O(\epsilon)$.
		
		Examples of sequences described include shifted primes and values of single-variable irreducible polynomials. 
		
		The proofs involve (i) a characterization of the Poisson-Dirichlet process due to Arratia-Kochman-Miller and (ii) an upper bound sieve.
	\end{abstract}

        \maketitle
	
	\section{Introduction}
	\label{sec:intro}
	
	\subsection{Background}
	\label{subsec:background}
	The purpose of this note is to study the distribution of large prime factors of elements in sequences which satisfy only a few minimal conditions.
	
	Let us recall the classical theory and notation: let $P^+(u)$ be the largest prime factor of a positive integer $u$. It is a consequence of a result of Dickman \cite{KDi} that for any fixed $c \in (0,\infty)$,
	$$
	\frac{1}{x}|\{u\leq x:\, P^+(u) \leq u^c\}| \sim \rho(1/c),
	$$
	as $x\rightarrow\infty$, where $\rho:(0,\infty) \rightarrow (0,1]$ is a continuous function called the Dickman function. (See e.g. \cite[Ch. 7.1]{MoVa} for a modern account.)
	
	This result was generalized and given a probabilistic interpretation by Billingsley \cite{Bi72} (and independently Knuth and Trabb Pardo \cite{KnTr} and Vershik \cite{Ve}). Let $u$ be chosen randomly and uniformly from the integers from $1$ to $x$. Then the above result says that $\log P^+(u)/\log u$ tends in distribution to a nonnegative random variable $L_1$ with cumulative distribution function $\PP(L_1\leq c) = \rho(1/c)$. Moreover, let $p_1 \geq p_2 \geq \cdots $ be the prime factors of $u$ listed with multiplicity, with the convention that $p_j = 1$ if $u$ has fewer than $j$ prime factors (so that $p_1 = P^+(u)$ and $u = p_1p_2\cdots$). Billingsley showed that there is a sequence of (dependent) random variables $L_1, L_2,...$ such that for any $k \geq 1$, and any fixed constants $c_1,c_2,...,c_k \in [0,\infty)$,
	$$
	\mathbb{P}\Big(\frac{\log p_1}{\log u} \leq c_1,..., \frac{\log p_k}{\log u} \leq c_k\Big) \rightarrow \mathbb{P}(L_1 \leq c_1,...,L_k \leq c_k),
	$$
	as $x\rightarrow\infty$. That is, the process $\tfrac{\log p_1}{\log u}, \tfrac{\log p_2}{\log u},...$ tends in distribution to $L_1, L_2,...$. (See e.g. \cite{Bi99} for a modern probabilistic account. Here and in what follows we discard with the case $u=1$ by adopting the formal convention $\log 1 / \log 1 = 1$.)
	
	For each $k$ an explicit formula for $\PP(L_1 \leq c_1,...,L_k \leq c_k)$ can be written down (see \cite[Thm. 4.4]{Bi99} for a density formula), but the formula is somewhat complicated. The following characterization is simpler: let $U_1, U_2, ...$ 
	be independent and identically distributed uniform random variables on the interval $(0,1)$, and define random variables $G_1 = 1- U_1$, $G_2 = U_1(1-U_2)$, $G_3 = U_1 U_2 (1-U_3)$, ...\,. Then $L_1 \geq L_2 \geq ...$ may be defined as the outcome of sorting $G_1, G_2,...$ into nonincreasing order. 
	
	The sequence of random variables $L_1, L_2, ...$ is known as the \emph{Poisson-Dirichlet process with parameter $\theta = 1$}. As the name suggests there are Poisson-Dirichlet processes with $\theta \neq 1$, but in this paper we will only deal with $\theta = 1$, and so if there is no risk of confusion, {we} refer to $L_1, L_2, ...$ as simply the Poisson-Dirichlet process. (See \cite{Bi99,Ki} for an account of general $\theta$, along with a more detailed introduction to the case $\theta = 1$.)
	
	We note that $\tfrac{\log p_1}{\log u} + \tfrac{\log p_2}{\log u} + \cdots = 1$, and likewise $L_1 + L_2 + \cdots = 1$ almost surely.
	
	\subsection{Some well-distributed arithmetic sequences}
	\label{subsec:welldistributed}
	
	It is natural to wonder whether this statistical pattern governing the distribution of large prime factors of random integers extends to other arithmetic sequences. Some cases which have been studied extensively include shifted primes \cite{BaHa,FeWu,Fo,Go,Ho73} -- that is, the sequence $\{p-a\}$ for a constant $a$ where $p$ ranges over the primes -- and the values of irreducible polynomials {\cite{Da,DaMa,DaMaTe,dlBr,dlBDr,DeIw,He,Ho67,Ir,Me}} -- that is the sequence $\{F(n)\}$, where $F$ is an irreducible polynomial with integer coefficients and a positive leading coefficient and $n$ ranges over the integers.
	
	In this paper we study a quite general class of arithmetic sequences. Let $(a_n)$ be a sequence of nonnegative numbers. Define the quantities
	\begin{equation}
		\label{eq:N_def}
		N(x) = \sum_{n\leq x} a_n, \quad\quad N_d(x) = \sum_{\substack{n\leq x \\ n \equiv 0 \Mod d}} a_n.
	\end{equation}
	We say that the sequence $(a_n)$:
	\begin{enumerate}[label = {(\Alph*)}]
		\item \label{A} \label{PROP=index} {Has \emph{index} $\alpha$ if for some $\alpha > 0$,
        $$
        N(x) = x^{\alpha+o(1)},
        $$}
		\item Has a \emph{level of distribution $\vartheta$} if for any $0 < c < \vartheta$ and any $A > 0$,
        \begin{equation}\label{eq:distribution-eqn}
            \sum_{d\leq x^c} |N_d(x) - g(d) N(x)| \ll_{c,A} \frac{1}{(\log x)^A} N(x),
        \end{equation}		
		where $g(d)$ is a multiplicative function with $g(d) \in [0,1]$ for all $d\geq 1$, and
		\begin{equation}
			\label{eq:g_asymp}
			\sum_{p \leq x} g(p) \log p = \log x + O(1), \quad \quad g(d) = O(C^{\Omega(d)}/d),
		\end{equation}
		for some constant $C \geq 1$.
		\item {\label{PROP=Locally-Uniform}} Is {\emph{congruence uniform} if there are constants $B\geq 0$, $C \geq 1$ such that
		$$
		N_d(x) \ll \Big(\frac{C^{\Omega(d)}}{d} N(x) + C^{\Omega(d)} \Big) (\log x)^B,\quad \textrm{for}\; d\leq x.
		$$
        }
	\end{enumerate}

    {Additionally we say the sequence $(a_n)$:
    \begin{itemize}
        \item Is $\sigma$ \textrm{\emph{well-distributed}} if $(a_n)$ satisfies each of (A), (B), and (C) and $\sigma$ is any positive number less than or equal to $\alpha$ and $\vartheta$.
    \end{itemize}}

	Here as usual $\Omega(d)$ is the number of prime factors of $d$ counted with multiplicity. Note that \eqref{eq:g_asymp} implies that $g(p^k) < 1$ except for finitely many primes $p$.

	Likewise note that we trivially have $N_d(x) \leq N(x)$, so that the condition for congruence uniformity is only meaningful for those integers $d$ with prime factors $p > C$.
	
	It will typically be the case that $a_n$ is the indicator function of $n$ belonging to some subset of the integers, and in that case we will also describe that subset by the above terminology as long as there is no chance for confusion.
    
    \begin{remark}
    \label{remark: ABC}
    {Let us explain some intuition for these conditions. (A) is self-explanatory;} in (B) one may keep in mind the examples $g(d) = 1/d$ or $g(d) = 1/\phi(d)$; and (C) may be thought of as a more technical condition -- the reader may check that it is trivially satisfied if the sequence $(a_n)$ is bounded and $N(x) \gg x/(\log x)^B$ for some $B$.
    \end{remark}
	
	Examples of sequences satisfying these conditions are described by the following propositions. Throughout the paper, for a proposition $\mathfrak{A}$, we use the notation $\mathbf{1}[\mathfrak{A}]$ to be $1$ if $\mathfrak{A}$ is true and $0$ otherwise.
	
	\begin{proposition}
		\label{prop:shiftedprimes_dist}
		{Shifted primes are $1/2$ well-distributed.} That is: for a fixed integer $a$, consider the set $\mathcal{B} = \{p-a:\, p\; \textrm{is prime}\}$ and let $a_n = \mathbf{1}[n \in \mathcal{B}]$. Then $(a_n)$ has index $1$, has level of distribution $\vartheta = 1/2$, and is congruence uniform.
	\end{proposition}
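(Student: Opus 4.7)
The plan is to take
$$
g(d) = \frac{\mathbf{1}[\gcd(a,d) = 1]}{\phi(d)},
$$
which is multiplicative with values in $[0,1]$, and to verify each of (A)--(C) using standard analytic tools. The two technical hypotheses on $g$ in \eqref{eq:g_asymp} are immediate: the bound $g(d) = O(2^{\Omega(d)}/d)$ follows from $d/\phi(d) = \prod_{p\mid d}(1-1/p)^{-1} \leq 2^{\omega(d)} \leq 2^{\Omega(d)}$, and $\sum_{p\leq x} g(p)\log p = \log x + O(1)$ reduces to the classical Mertens estimate $\sum_{p\leq x} (\log p)/(p-1) = \log x + O(1)$ after discarding the finitely many primes $p\mid a$.

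Regularity is immediate from the prime number theorem: $N(x) = \pi(x+a) \sim x/\log x$, so $N(x^c)/N(x) \to 0$ for any $c < 1$. Local uniformity is immediate from Brun--Titchmarsh, which for $\gcd(a,d) = 1$ and $d \leq x^{1-\delta}$ gives $\pi(x+a;d,a) \ll x/(\phi(d)\log x) \ll 2^{\Omega(d)} N(x)/d$; when $\gcd(a,d) > 1$ the trivial bound $N_d(x) \leq \omega(a)$ suffices since it is absorbed by $2^{\Omega(d)} N(x)/d$ for $d \leq x^\delta$.

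The substantive ingredient is the level of distribution $\vartheta = 1/2$, which I expect to derive directly from the Bombieri--Vinogradov theorem. For any $c < 1/2$ and any $A, B_0 > 0$, we have $x^c \leq x^{1/2}/(\log x)^{B_0}$ for $x$ large, so Bombieri--Vinogradov gives
$$
\sum_{\substack{d\leq x^c \\ \gcd(a,d)=1}} \left|\pi(x+a; d, a) - \frac{\mathrm{li}(x+a)}{\phi(d)}\right| \ll_A \frac{N(x)}{(\log x)^A}.
$$
Replacing $\mathrm{li}(x+a)$ by $N(x) = \pi(x+a)$ introduces an extra error of order $N(x)/(\log x)^A$ after summing $\sum_{d\leq x^c} 1/\phi(d) \ll \log x$. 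The terms with $\gcd(a,d) > 1$ are handled separately: there $g(d) = 0$, while any prime $p$ with $p\equiv a\pmod d$ must divide $a$, so $N_d(x) \leq \omega(a)$; summing trivially over $d\leq x^c$ contributes $O_a(x^c) = o(N(x)/(\log x)^A)$ for $c < 1$.

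I do not expect a genuine obstacle beyond bookkeeping: the only real content is invoking Bombieri--Vinogradov and matching its shape to the form of \eqref{eq:distribution-eqn}, together with the mild observation that $d$ sharing a common factor with $a$ forces $p\in\{\text{prime divisors of }a\}$ and therefore contributes negligibly on the nontrivial side of \eqref{eq:distribution-eqn}.
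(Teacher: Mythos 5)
Your proof is correct and follows essentially the same route as the paper: regularity from the prime number theorem, local uniformity from an upper bound on $N_d(x)$ (the paper notes the trivial bound $N_d(x)\le x/d+1$ together with Chebyshev already suffices, so Brun--Titchmarsh is more than is needed), and level of distribution $1/2$ from Bombieri--Vinogradov. Your choice $g(d)=\mathbf{1}[\gcd(a,d)=1]/\phi(d)$, with the separate treatment of moduli sharing a factor with $a$, is in fact slightly more careful than the paper's bare statement $g(d)=1/\phi(d)$.
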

	
	\begin{proof}
		{That the sequence has index $\alpha = 1$ follows} from the prime number theorem (or indeed Chebyshev's bounds). {By Remark \ref{remark: ABC}, congruence uniformity also follows from Chebyshev's lower bound $\pi(x) \gg x/\log x$.} {The} level of distribution $1/2$ follows from the Bombieri-Vinogradov theorem \cite[Thm. 9.2.1]{CoMu}, with {$g(d) = \mathbf{1}_{(a,d)=1}/\phi(d)$, where $\mathbf{1}_{(a,d)=1}$ is $1$ when $d$ is coprime to $a$ and $0$ otherwise.}
	\end{proof}
	
	\begin{proposition}
		\label{prop:polynomials_dist}
        {The values of irreducible polynomials of degree $D$ are $1/D$ well-distributed.} That is: for a polynomial $F(X) \in \Z[X]$ of degree $D\geq 1$ which is irreducible with positive leading coefficient, consider the set $\mathcal{C} = \{ F(n):\, n \in \N_{\geq 1} \} \cap \N_{\geq 1}$ and let $a_n = \mathbf{1}[n \in \mathcal{C}]$. Then $(a_n)$ {has index $1/D$}, has level of distribution $\vartheta = 1/D$, and is congruence uniform. 
	\end{proposition}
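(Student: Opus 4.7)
The plan is to mirror the outline of Proposition \ref{prop:shiftedprimes_dist}, substituting an elementary residue-class computation for the Bombieri--Vinogradov input. Writing $c_D > 0$ for the leading coefficient of $F$ and noting that $F$ is strictly increasing on $[M_0, \infty)$ for some $M_0 = M_0(F)$, one has $N(x) = \#\{n \geq 1 : F(n) \leq x\} + O(1) = (x/c_D)^{1/D} + O(1)$ after a Taylor inversion of $F$. Regularity is then immediate, since $N(x^c)/N(x) \asymp x^{(c-1)/D} \to 0$ for $c < 1$. For the multiplicative function in the level-of-distribution condition I would take $g(d) = \rho(d)/d$, where $\rho(d) = \#\{r \Mod d : F(r) \equiv 0 \Mod d\}$. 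Multiplicativity of $\rho$ is CRT, $g(d) \in [0,1]$ is automatic, and the bound $g(d) \ll C^{\Omega(d)}/d$ reduces to the pointwise estimate $\rho(p^a) \leq C^a$: this holds with $C = D$ for all primes $p$ outside a finite exceptional set (by Hensel lifting applied to the bound $\rho(p) \leq D$ at unramified primes), while for the finitely many remaining ramified primes a crude direct bound suffices.

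The one substantive input is the asymptotic $\sum_{p \leq x} g(p) \log p = \log x + O(1)$. I would deduce this from Landau's prime ideal theorem applied to the number field $K = \Q[X]/(F(X))$: for all but finitely many primes $p$, the quantity $\rho(p)$ equals the number of degree-one prime ideals of $\mathcal{O}_K$ above $p$, while higher-degree primes contribute $O(x^{1/2} \log x)$ to $\psi_K(x)$. Thus $\sum_{p \leq x} \rho(p) \log p = x + O(x e^{-c\sqrt{\log x}})$, and a partial summation yields the desired asymptotic for $g$.

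With $g$ in hand, splitting $N_d(x)$ into residue classes mod $d$ gives $N_d(x) = \rho(d) \lfloor F^{-1}(x)/d \rfloor + O(\rho(d)) = g(d) N(x) + O(\rho(d))$. To sum the error, I would use $\sum_{d \leq y} \rho(d) \ll y (\log y)^{O(1)}$, a standard Halberstam--Richert-type bound for nonnegative multiplicative functions with $\rho(p) \leq D$ and the already-verified average $\rho(p) = 1$ on primes. For $d \leq x^c$ with $c < 1/D$, the cumulative error is $\ll N(x)/(\log x)^A$ for any $A$, establishing level of distribution $\vartheta = 1/D$. Local uniformity with any $\delta < 1/D$ falls out of the same identity, because $\rho(d) \ll C^{\Omega(d)} \ll (C^{\Omega(d)}/d) N(x)$ whenever $d \leq N(x) \asymp x^{1/D}$. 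The principal obstacle is the $g$-asymptotic, where Landau's theorem is the essential ingredient; every other step is elementary bookkeeping.
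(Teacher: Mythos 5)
Your proposal is correct and follows essentially the same route as the paper: the same $g(d)=\rho(d)/d$ built from roots of $F$ modulo $d$, the same residue-class computation giving $|N_d(x)-g(d)N(x)|\ll\rho(d)$, the same mean-value bound on $\sum_{d\le y}\rho(d)$, and the same Hensel/ramified-prime split for $\rho(p^a)\le C^a$. The only cosmetic difference is that you invoke Landau's prime ideal theorem for $\sum_{p\le x}\rho(p)\log p\sim x$ where the paper cites the Frobenius density theorem; these supply the same input.
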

	\begin{proof}
	{That the index is $1/D$} follows from the fact that $N(x) \asymp x^{1/D}$. 
		
		We see that the level of distribution is $1/D$ in the following way. For a natural number $d$, let $h(d)$ be the number of distinct roots of $F$ modulo $d$. By the Chinese Remainder Theorem (see \cite[Theorem 46]{Nagell}) we note that $h$ is multiplicative. Set $g(d) = h(d)/d$ and note $g$ is multiplicative also. 
  
        Obviously $g(d) \in [0,1]$. We will show that \eqref{eq:distribution-eqn} and \eqref{eq:g_asymp} are satisfied for this function $g$. Let us show that \eqref{eq:g_asymp} holds first. It is known that $\sum_{p\leq x} g(p) \log p = \log x + O(1)$; this is just a weak claim that $F$ will on average have one root modulo a prime, see e.g. \cite[ Eq. (4), Pg. 352]{Na}, or \cite[Corollary 4]{Le} for a more modern statement. 
        
        The second claim in \eqref{eq:g_asymp} is just the claim that $h(d) \ll C^{\Omega(d)}$ for some constant $C > 1$. {This follows from the multiplicativity of $h$ and \cite[Theorem 54]{Nagell} by taking $C=D \cdot \mathrm{disc}(F)^2$}.

        Now, let us now prove \eqref{eq:distribution-eqn}. {We first establish that
        \begin{equation}
        \label{eq: pointwise_diff}
        |N_d(x) - g(d)N(x)| \ll h(d),
        \end{equation}
        where the implicit constant depends only on $F$.
        
        To see this, note that since $F(n+d) \equiv F(n) \bmod d$, in every interval of length $d$ we will have $F(n) \equiv 0 \Mod d$ for $h(d)$ values of $n$. In particular since $F$ is eventually increasing, there is some sufficiently large $x_0$ (depending on $F$) such that whenever $N(x)$ increases by $d$ on an interval to the right of $x_0$, $N_d(x)$ will increase by $h(d)$ on the same interval. Therefore 
        $$
        |N_d(x) - g(d)N(x)| \leq h(d) + |N_d(x_0) - g(d)N(x_0)| = h(d) + O(1)
        $$
        with the $O(1)$ term present to account for behavior of this quantity for $x \leq x_0$.

        If $h(d) \geq 1$, this verifies \eqref{eq: pointwise_diff}. On the other hand if $h(d) = 0$, we have $N_d(x) = g(d) N(x) = 0$ for all $x$, so \eqref{eq: pointwise_diff} is verified in this case also.}

        Hence from \eqref{eq: pointwise_diff},
        \begin{equation}
        \label{eq: average_diff}
        \sum_{d \le x^c} |N_d(x) - g(d)N(x)| \ll \sum_{d \le x^c}h(d) \ll_{c} x^c (\log x)^A,
        \end{equation}
        where we use Lemma \ref{lem:estimate-C}, proved below, in the last estimate. As $N(x) \asymp x^{1/D}$, we see \eqref{eq:distribution-eqn} is satisfied as long as $c < 1/D$.
        
        {To prove congruence uniformity, we use the bound {$ N_d(x) = g(d)N(x) + O(h(d))$}. Consequently for a constant $C > 1$ as above, we see that $N_d(x) \ll \frac{C^{\Omega(d)}}{d} N(x) + C^{\Omega(d)}$, for all $d \leq x$.}
	\end{proof}
        
        We have used one of the following results above and we will need them later as well. 

        \begin{lemma}
        \label{lem:estimate-C}
        Suppose $h(d)$ is a multiplicative function and $g(d) = h(d)/d$ satisfies $g(d) \in [0,1]$ for all $d$ and $g(d) = O(C^{\Omega(d)}/d)$ for some constant $C \geq 1$. Then for some constant $A > 0$, 
        $$
        \sum_{n \le x} g(n) \ll (\log x)^A
        $$ 
        and 
        $$
        \sum_{n \le x} h(n) \ll x (\log x)^A.
        $$
        \end{lemma}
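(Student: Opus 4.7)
The plan is to deduce the first bound from a standard Rankin-type Euler product expansion exploiting nonnegativity and multiplicativity, and then to deduce the second bound as an immediate consequence of $h(n) = n g(n)$.

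For the first estimate, I would start with the inequality
$$
\sum_{n \le x} g(n) \le \prod_{p \le x}\bigg(\sum_{k=0}^{\lfloor \log_p x\rfloor} g(p^k)\bigg),
$$
which holds because $g \ge 0$ is multiplicative (expand the right-hand side; every $n\le x$ appears on the right, plus extra nonnegative terms). To control each local factor I would split the primes at $p=2C$. For the finitely many primes $p \le 2C$, I use only the hypothesis $g(p^k) \le 1$, giving $\sum_{k\le \log_p x} g(p^k) \le \log_p x + 1 \ll \log x$. Since there are $O_C(1)$ such primes, their joint contribution is at most $(\log x)^{O_C(1)}$.

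For primes $p > 2C$ I would use the stronger hypothesis $g(p^k) \ll C^k/p^k$, yielding a geometric series bound
$$
\sum_{k\ge 1} g(p^k) \ll \frac{C/p}{1 - C/p} \ll \frac{1}{p}.
$$
Hence $\prod_{2C < p \le x}\bigl(1 + O(1/p)\bigr) \ll (\log x)^{O(C)}$ by Mertens' theorem. Combining the two ranges gives $\sum_{n\le x} g(n) \ll (\log x)^A$ for some $A = A(C)$.

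The second bound then follows trivially: for $n \ge 1$ one has $h(n) = n\, g(n)$, so for $n \le x$, $h(n) \le x\, g(n)$, and summation yields $\sum_{n\le x} h(n) \le x \sum_{n\le x} g(n) \ll x(\log x)^A$. The only mildly subtle point is the treatment of the small primes $p \le 2C$, where the power-saving $g(p^k) \ll C^k/p^k$ does not produce a convergent geometric series; this is where the boundedness hypothesis $g(d)\in [0,1]$ is used in an essential way, rather than being redundant.
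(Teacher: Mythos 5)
Your proof is correct and follows essentially the same strategy as the paper's: split the primes at $2C$, use the trivial bound $g(p^k)\le 1$ together with the $O(\log x)$ count of admissible prime powers for the finitely many small primes, and use the geometric series plus a Mertens-type estimate for the primes $p>2C$; the second bound follows from $h(n)=n\,g(n)\le x\,g(n)$ exactly as in the paper. The only cosmetic difference is that you majorize the sum by a truncated Euler product over all $p\le x$ at once, whereas the paper factors each $n$ into its $2C$-smooth part and the complementary coprime part, but the content is identical.
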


        \begin{proof}
        Let $p_1,...,p_\ell$ be the finite set of primes less than $2C$, and set $P = p_1\cdots p_\ell$. Then
		\begin{multline*}
		\sum_{\substack{n\leq x \\ (n,P)=1}} g(n) \ll \sum_{\substack{n\leq x \\ (n,P)=1}} C^{\Omega(n)}/n \leq \prod_{\substack{p \leq x \\ (p,P)=1}} \Big(1 + \frac{C}{p} + \frac{C^2}{p^2}+\cdots\Big) \\
		= \exp\Big(\sum_{p\leq x} \frac{C}{p} + O(1)\Big) \ll (\log x)^C,
		\end{multline*} 
		where we have used the fact that $C/p$ in the product above is no more than $1/2$ in order to sum the series. Using multiplicativity and $g(p_i^{e_i}) \in [0,1]$ for $1\leq i \leq \ell$, we have from a crude bound
		\begin{multline*}
		\sum_{n\leq x} g(n) \le \sum_{p_1^{e_1}\cdots p_\ell^{e_\ell} \leq x} \;\sum_{\substack{m \leq x/p_1^{e_1}\cdots p_\ell^{e_\ell}\\ (m,P)=1}}g(m) \\
		\ll (\log x)^C \Big(\sum_{e_1:\; p_1^{e_1}\leq x} 1\Big) \cdots \Big(\sum_{e_\ell:\; p_\ell^{e_\ell}\leq x} 1\Big) \ll (\log x)^{C+\ell}.
		\end{multline*}
		This proves the first estimate. For the second, we have
		$$
		\sum_{n\leq x} h(n) \leq x \sum_{n\leq x} g(n),
		$$
		which implies the claim.
        \end{proof}
	
	Regarding the level of distribution of shifted primes, one expects more can be said:
	
	\begin{conjecture}
		\label{conj:shiftedprimes_dist}
		The shifted primes have level of distribution $\vartheta = 1$.
	\end{conjecture}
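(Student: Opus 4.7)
Unpacking the definitions for $a_n = \mathbf{1}[n \in \mathcal{B}]$ with $\mathcal{B} = \{p-a:\, p\text{ prime}\}$ shows that $N_d(x) = \pi(x+a;d,a)$ and $g(d) = \mathbf{1}[(d,a)=1]/\phi(d)$, so Conjecture~\ref{conj:shiftedprimes_dist} is, after routine manipulations, exactly the Elliott--Halberstam conjecture restricted to the fixed residue class $a$. This is a well-known open problem in analytic number theory, so any ``proof proposal'' here is necessarily aspirational; nonetheless, the natural plan is as follows.

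The baseline input is the Bombieri--Vinogradov theorem, which already gives $\vartheta = 1/2$ (as in Proposition~\ref{prop:shiftedprimes_dist}). To push past the square-root barrier I would follow the program of Bombieri--Friedlander--Iwaniec: decompose the indicator of the primes by a combinatorial identity (Heath-Brown, Vaughan, or Linnik style), reducing \eqref{eq:distribution-eqn} to a sum of bilinear forms in the modulus variable $d$; reorganize the summation so the $d$-sum sits inside; and then bound the resulting forms using Weil's estimate for Kloosterman sums, Deligne-type bounds for more general exponential sums, and the dispersion method together with spectral averaging of Kloosterman sums (``Kloostermania'' via the Kuznetsov trace formula). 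For a fixed residue class $a$ this machinery is known to yield an extension to $\vartheta > 1/2$ provided the moduli carry a well-factorable or smoothly supported weight, and a subsequent stage of the plan would be to remove such weights by combining many level-of-distribution results with combinatorial averaging over the BFI decomposition.

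The main obstacle, and the reason Conjecture~\ref{conj:shiftedprimes_dist} is open, is that the bilinear-form estimates at the heart of BFI lose on one of the two ranges as soon as the weights are only absolutely bounded rather than smoothly controlled; this produces an unavoidable loss that prevents reaching individual moduli $d$ as large as $x^{1-\epsilon}$ for every $\epsilon > 0$. Standard conditional inputs do not obviously help: GRH for Dirichlet $L$-functions yields only $\vartheta = 1/2$ (with Siegel--Walfisz-strength savings), and the Friedlander--Granville construction shows that the most naive formulation of Elliott--Halberstam (uniform in $a$) is actually false for $c$ too close to $1$, so any successful strategy must genuinely exploit the fixedness of $a$. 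A proof of the full conjecture would, in all likelihood, require an essentially new analytic or algebro-geometric input beyond what currently governs primes in arithmetic progressions.
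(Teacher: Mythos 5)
The statement you were asked about is a \emph{conjecture}, and the paper offers no proof of it: immediately after stating it, the authors remark only that it is a slightly weaker version of the Elliott--Halberstam conjecture. Your assessment is therefore the correct one --- you rightly identify that, after unwinding the definitions ($N_d(x)$ essentially counts primes $p\le x+a$ with $p\equiv a \Mod d$, and $g(d)=1/\phi(d)$ away from the finitely many primes dividing $a$), the claim is the fixed-shift Elliott--Halberstam conjecture, and you correctly decline to claim a proof. Your survey of the state of the art (Bombieri--Vinogradov giving $\vartheta=1/2$, the Bombieri--Friedlander--Iwaniec dispersion/Kloostermania program giving exponents beyond $1/2$ only for well-factorable weights, the failure of uniformity in $a$ near $c=1$, and GRH not helping past $1/2$) is accurate and consistent with everything the paper says; there is no gap to report because no proof exists or is expected here.
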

	
	Indeed this is a slightly weaker version of the Elliott-Halberstam conjecture \cite[Ch. 9.2]{CoMu}. 
	
	On the other hand it does not seem that the values of irreducible polynomials of degree $2$ or greater have level of distribution $1$. 
	
	Some interesting arithmetic sequences are known to have level of distribution $1$ however, for instance those positive integers $\mathcal{S}$ which indicate a \texttt{0} in the Thue-Morse sequence \cite{Sp}. $\mathcal{S}$ may be characterized in the following way: it is the collection of positive integers $n$, where $n$ has an even number of \texttt{1}s in its binary expansion.
	
	\begin{proposition}
		\label{prop:thuemorse_dist}
		{The values of the Thue-Morse sequence are $1$ well-distributed. That is, let $a_n = \mathbf{1}[n \in \mathcal{S}]$. Then $(a_n)$ has index 1, has level of distribution $\vartheta = 1$, and is congruence uniform.}
	\end{proposition}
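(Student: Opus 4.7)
The plan is to dispatch regularity and local uniformity by elementary density considerations and then to derive level of distribution $1$ from the theorem of Spiegelhofer \cite{Sp}.

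First I would record that $N(x) \sim x/2$, a consequence of the fact that exactly half of $\{0,1,\ldots,2^k-1\}$ lies in $\mathcal{S}$ (together with a dyadic decomposition to handle general $x$); this is classical. Regularity is then immediate from $N(x^c) \ll x^c = o(N(x))$ for every $c < 1$. For local uniformity I would use the trivial bound $N_d(x) \leq \lfloor x/d \rfloor + 1$, combine it with $N(x) \asymp x$, and conclude $N_d(x) \ll N(x)/d$ for $d \leq x^{1/2}$, giving local uniformity with parameters $\delta = 1/2$, $B = 0$, $C = 1$.

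The substantive step is the level-of-distribution bound \eqref{eq:distribution-eqn} with $\vartheta = 1$. I would take $g(d) = 1/d$, which is the natural candidate given that $\mathcal{S}$ should be equidistributed in residue classes, and which trivially satisfies \eqref{eq:g_asymp} by Mertens' theorem, with $C = 1$. The required bound \eqref{eq:distribution-eqn} then reduces essentially to the main theorem of Spiegelhofer \cite{Sp}, which supplies for each fixed $A > 0$ and $\epsilon > 0$ a discrepancy estimate of the form $|N_d(x) - N(x)/d| \ll_{A,\epsilon} x/(d (\log x)^A)$, uniformly over $d \leq x^{1-\epsilon}$. Summing trivially over $d \leq x^c$ for any $c < 1$ and reducing $A$ by a bounded amount yields \eqref{eq:distribution-eqn}.

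The hard part is wholly concentrated in the harmonic-analytic machinery of \cite{Sp}; the contribution of the present proof is only to extract the three requisite conditions from that work. If Spiegelhofer's estimate, as published, is formulated for a single residue class rather than residue class $0$ modulo $d$, or with the modulus in a slightly different range, the adaptation amounts to a cost-free reindexing, since the $(\log x)^{-A}$ saving is of arbitrary polynomial order in $\log x$.
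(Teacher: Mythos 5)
Your proposal is correct and follows the same route as the paper: regularity and local uniformity from the density $N(x)\asymp x$ together with the trivial bound $N_d(x)\le x/d$ (exactly the observation made after condition (C) in Section \ref{subsec:welldistributed}), and level of distribution $1$ from Spiegelhofer's main theorem \cite{Sp}, with $g(d)=1/d$ verified via Mertens. One caveat: you paraphrase Spiegelhofer's Theorem 1.1 as an \emph{individual} estimate $|N_d(x)-N(x)/d|\ll x/(d(\log x)^A)$ uniformly for $d\le x^{1-\epsilon}$, which is not what is proved (and would be a far stronger pointwise equidistribution claim; the best individual bound, due to Gelfond \cite{Ge}, has error $O(x^{\log 3/\log 4})$ and only yields level of distribution $1-\log 3/\log 4$). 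What \cite{Sp} actually establishes is the \emph{averaged} discrepancy bound, i.e.\ a bound on $\sum_{d\le x^{1-\epsilon}}\max_a|\cdots|$ of the quality required in \eqref{eq:distribution-eqn}; this is precisely (indeed more than) what you need, so your final step should simply quote that summed statement and specialize to the residue class $0$, skipping the "summing trivially" step entirely. With that adjustment the argument is complete and matches the paper's.
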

	
	\begin{proof}
		{The claim that the sequence has index 1 follows from classical results of Gelfond \cite{Ge} which implies $N(x) \sim x/2$ -- see Theorem A in \cite{Sp}. The claim that the sequence has level of distribution $\vartheta = 1$ follows from Theorem 1.1 of \cite{Sp}. And again by Remark \ref{remark: ABC}, congruence uniformity is trivial.}
	\end{proof}

	\subsection{Main results}
	\label{subsec:mainresults}
	
	Our main results depend on the following setup. As above we let $(a_n)$ be a sequence of nonnegative numbers, and for a parameter $x$ we let $u$ be a random integer such that 
	$$
	\PP(u = m) = \frac{a_m}{N(x)} \mathbf{1}[1 \leq m \leq x].
	$$
	In the case that $a_n$ is the indicator function of a subset of natural numbers, $u$ is uniformly distributed on elements of the set no more than $x$. As before, we let $p_1 \geq p_2 \geq \cdots $ be the prime factors of $u$ listed with multiplicity, with the convention that $p_j = 1$ if $n$ has fewer than $j$ prime factors.
	
	We will prove results comparing the distribution of $p_1,p_2,...$ to a Poisson-Dirichlet process (Theorem \ref{thm:poisson_dirichlet} and Lemma \ref{lem:corr_func}) and also an upper bound for the likelihood that $p_1 = P^+(u)$ is exceptionally large (Theorem \ref{thm:upperbound}). These results are related in that they use almost the same information, but because they may be of independent interest we have written this note so that their proofs may be read independently.
	
	\begin{theorem}
		\label{thm:poisson_dirichlet}
		If $(a_n)$ is {$1$ well-distributed}, then
		$$
		\mathbb{P}\Big(\frac{\log p_1}{\log u} \leq c_1,..., \frac{\log p_k}{\log u} \leq c_k\Big) \rightarrow \mathbb{P}(L_1 \leq c_1,...,L_k \leq c_k),
		$$
		as $x\rightarrow\infty$. That is, the process $\tfrac{\log p_1}{\log u}, \tfrac{\log p_2}{\log u},...$ tends in distribution to the Poisson-Dirichlet process $L_1, L_2,...$.
	\end{theorem}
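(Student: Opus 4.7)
The plan is to apply a characterization of the Poisson--Dirichlet process due to Arratia--Kochman--Miller which reduces convergence in distribution of the sorted sequence $(\log p_i/\log u)_{i\ge 1}$ to $\mathrm{PD}(1)$ to the convergence of every $k$-point factorial-moment measure on compact subsets of the open simplex $\Delta_k=\{(y_1,\ldots,y_k)\in(0,1)^k:\sum_j y_j<1\}$, the target intensity for $\mathrm{PD}(1)$ on the interior of $\Delta_k$ being the symmetric measure $dy_1\cdots dy_k/(y_1\cdots y_k)$. Tightness together with the fact that no mass escapes to configurations with $\sum_i L_i<1$ follow from the identity $\log u=\sum_i\log p_i$ combined with regularity, which gives $\log u/\log x\to 1$ in probability since $\PP(u\le x^c)=N(x^c)/N(x)=o(1)$ for each fixed $c<1$.

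Concretely, for each box $B=\prod_j[a_j,b_j]\subset\Delta_k$, one computes the expected number of ordered $k$-tuples $(i_1,\ldots,i_k)$ of distinct prime-factor positions of $u$ with $\log p_{i_j}/\log u\in[a_j,b_j]$. Since every prime $q_j$ appearing lies above $x^{\min_j a_j}$, the contribution of primes occurring with multiplicity $\ge 2$ is at most $O(x^{-\min_j a_j})=o(1)$ by a crude Mertens-type bound, so up to $o(1)$ errors this expectation equals
\begin{equation*}
\sum_{\substack{q_1,\ldots,q_k\text{ distinct primes}\\ q_j\in[x^{a_j},x^{b_j}]}}\frac{N_{q_1\cdots q_k}(x)}{N(x)}.
\end{equation*}
Each modulus $d=q_1\cdots q_k$ satisfies $d\le x^{b_1+\cdots+b_k}\le x^{1-\eta}$ for some $\eta>0$ by compactness of $B$ inside $\Delta_k$, so the level-of-distribution hypothesis with $\vartheta=1$ yields, after reorganizing as a sum over squarefree $d$ (at most a $k!$ overcounting) and applying \eqref{eq:distribution-eqn},
\begin{equation*}
\sum_{\substack{q_1,\ldots,q_k\\ q_j\in[x^{a_j},x^{b_j}]}}\Big|\frac{N_{q_1\cdots q_k}(x)}{N(x)}-g(q_1)\cdots g(q_k)\Big|=o(1).
\end{equation*}

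Partial summation from the asymptotic $\sum_{p\le y}g(p)\log p=\log y+O(1)$ in \eqref{eq:g_asymp} converts each factor $\sum_{q\in[x^{a_j},x^{b_j}]}g(q)$ into $\int_{a_j}^{b_j}dy_j/y_j+o(1)$, and the product reproduces the desired $\int_B dy_1\cdots dy_k/(y_1\cdots y_k)$; replacing $\log u$ by $\log x$ inside the original indicator is a routine $o(1)$ adjustment via regularity and uniform continuity. The main obstacle, and the reason the hypothesis \emph{level of distribution equal to $1$} is used in full strength, is that the Arratia--Kochman--Miller characterization demands intensity convergence for boxes $B$ with $\sum_j b_j$ arbitrarily close to $1$; any strict deficit $\vartheta<1$ would leave a wedge $\{\sum y_j\ge\vartheta\}$ of the simplex inaccessible and yield only the restricted-support convergence recorded in Lemma~\ref{lem:corr_func}. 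A secondary technicality is controlling prime-power divisors and possible repeated primes across the $k$-tuple, which is handled by the bound $g(d)=O(C^{\Omega(d)}/d)$ in \eqref{eq:g_asymp} together with Lemma~\ref{lem:estimate-C}.
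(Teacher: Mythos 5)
Your argument is correct and follows essentially the same route as the paper: the Arratia--Kochman--Miller criterion combined with an asymptotic evaluation of the $k$-point correlation sums via the level-of-distribution-$1$ hypothesis and partial summation against $\sum_{p\le y}g(p)\log p=\log y+O(1)$, with regularity handling the passage from $\log u$ to $\log x$. The only organizational difference is that you evaluate the factorial moments directly on boxes, whereas the paper first proves Lemma~\ref{lem:corr_func} for continuous test functions and then minorizes the box indicators by continuous functions (exploiting that the AKM criterion only requires a $\liminf$ lower bound); since the limiting intensity $d^kt/(t_1\cdots t_k)$ has no atoms, the boundary bookkeeping is the same in substance.
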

	
	This generalizes to multiple prime factors a result noted by Granville \cite{Gr} (with details of the proof provided by Wang \cite{Wa}) that for shifted primes the Elliott-Halberstam conjecture implies the distribution of the largest prime divisor is governed by the Dickman function (a phenomenon first conjectured by Pomerance \cite{Po}).  And this result proves unconditionally that large prime factors of the Thue-Morse tend to a Poisson-Dirichlet distribution.
	
	On the other hand, while the values of irreducible polynomials do not appear to have level of distribution $1$, it is reasonable to believe that their prime factors tend to a Poisson-Dirichlet distribution. That the distribution of the largest prime factor is governed by the Dickman function was given a conditional proof by Martin \cite{Ma} on the assumption of a prime number theorem for polynomial sequences. It may be possible to formulate a relaxed version of level of distribution $1$ which applies to the values of irreducible polynomials and which also implies a Poisson-Dirichlet distribution for large prime factors, but we do not pursue this further here. (Indeed the condition $(D_\sigma)$ in the very recent preprint \cite{Mo} might be a correct starting point.)

	Even for a sequence with level of distribution less than 1, one may still compare the \emph{correlation functions} of its large prime factors to those of the Poisson-Dirichlet process, at least against test functions with restricted support.
	
	\begin{lemma}
		\label{lem:corr_func}
		If $(a_n)$ {is $\sigma$ well-distributed}, then for any ${k \geq 1}$ and any continuous 
        $\eta: \R^k \rightarrow \mathbb{C}$ with $\supp \eta \subset \{y \in \R_+^k:\, y_1 + \cdots + y_k < \sigma\}$ {(so in particular $\eta$ is compactly supported)},
		\begin{equation}
			\label{eq:partial_corr}
			\EE \sum_{\substack{j_1,...,j_k \\ \textrm{distinct}}} \eta\Big(\frac{\log p_{j_1}}{\log u},..., \frac{\log p_{j_k}}{\log u}\Big) \rightarrow \EE \sum_{\substack{j_1,...,j_k \\ \textrm{distinct}}} \eta(L_{j_1},...,L_{j_k}),
		\end{equation}
		as $x\rightarrow\infty$.
	\end{lemma}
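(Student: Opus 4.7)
The plan is to compute the left-hand side of \eqref{eq:partial_corr} directly via the level of distribution hypothesis, and to identify the resulting limit with the $k$-point correlation function of PD$(1)$. The expected identity is that both sides equal $\int \eta(y_1,\ldots,y_k)\,\prod_i dy_i/y_i$. This representation of the PD$(1)$ correlation function follows from the Ewens sampling formula (the expected number of ordered $k$-tuples of disjoint cycles of lengths $\ell_1,\ldots,\ell_k$ in a uniformly random permutation of $\{1,\ldots,n\}$ is $1/\prod_i \ell_i$, so normalising cycle lengths by $n$ gives correlation density $\prod_i 1/y_i$), or equivalently by applying the argument below to the trivial sequence $a_n\equiv 1$ and invoking Billingsley's theorem.

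Writing the LHS as $\tfrac{1}{N(x)}\sum_{n\le x} a_n S_\eta(n)$, I would split the inner sum into a \emph{main term} $M_\eta(n)$, indexed by ordered $k$-tuples $(q_1,\ldots,q_k)$ of pairwise distinct primes dividing $n$, plus a \emph{diagonal error} from distinct indices $j_a\ne j_b$ whose prime values coincide (which forces $p^2\mid n$ for the common prime $p$). For the main term, partition $n$ into dyadic blocks $(X,2X]$ so that $\log n=\log X+O(1)$; uniform continuity of $\eta$ then permits replacing $\eta(\log q_i/\log n,\ldots)$ by $\eta(\log q_i/\log X,\ldots)$ with vanishing error. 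The support condition $\supp\eta\subset\{\sum y_i<\vartheta\}$ confines $d:=q_1\cdots q_k$ to $d<X^{\vartheta-\epsilon_0}$ for some $\epsilon_0>0$, putting us within the level-of-distribution range; swapping the order of summation and replacing $N_d(X)$ by $g(d)N(X)=g(q_1)\cdots g(q_k)N(X)$ incurs total error $\ll N(X)/(\log X)^A$. Partial summation against $\sum_{p\le t} g(p)\log p=\log t+O(1)$ then upgrades
$$
\sum_p f(\log p/\log X)\,g(p) \longrightarrow \int_0^\infty f(y)\,\frac{dy}{y}
$$
for compactly supported continuous $f$, and iterating over the $k$ coordinates yields $\int \eta(y)\prod_i dy_i/y_i$ in the limit.

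The main obstacle is controlling the diagonal error, since a naive local-uniformity bound carries factors of $(\log x)^B$ that the main term cannot absorb. I would split the contributing prime $p$ (with $p^2\mid n$) at $p=x^\delta$ for a small $\delta>0$. For large $p\ge x^\delta$, level of distribution at modulus $p^2$ (valid since $p^2<x^\vartheta$) together with $g(p^2)\ll 1/p^2$ gives $\sum_{p\ge x^\delta} N_{p^2}(x)\ll N(x)/x^\delta=o(N(x))$. For small $p<x^\delta$, the key observation is that continuity of $\eta$ on $\R^k$ together with $\supp\eta\subset\R_+^k$ forces $\eta$ to vanish on the boundary hyperplanes $\{y_i=0\}$ (by continuity into the exterior region where $\eta\equiv 0$), so $|\eta(\log p/\log n,\log p/\log n,\ldots)|\le \omega_\eta(\delta\sqrt 2)$ where $\omega_\eta$ is the modulus of continuity of $\eta$; combining with $g(p^a)\ll C^a/p^a$ and treating the finitely many fixed primes $p\le C$ separately (using the pointwise vanishing $\eta(\log p/\log n,\ldots)\to 0$ as $n\to\infty$ together with the regularity hypothesis to discard $n\le x^{1-\epsilon}$) yields a small-prime bound of $\omega_\eta(\delta)\cdot O(1)+o_x(1)$. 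The total diagonal error is thus $o_x(1)+\omega_\eta(\delta)\cdot O(1)$, which vanishes upon sending $x\to\infty$ and then $\delta\to 0$. For $k\ge 3$ the diagonal combinatorics is slightly richer (one decomposes by the partition of $\{1,\ldots,k\}$ induced by equality of prime values), but each piece yields to the same small-vs-large split.
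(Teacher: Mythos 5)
Your proposal is correct and follows essentially the same route as the paper's proof: rewrite the correlation sum as a sum over prime tuples with $q_1\cdots q_k\mid n$, use the support hypothesis to confine the moduli to the level-of-distribution range and to bound the number of contributing tuples uniformly, replace $N_d$ by $g(d)N$, discard the diagonal using $g(q^j)\ll C^j/q^j$ for the large primes involved, and finish by partial summation against $\sum_{p\le t}g(p)\log p=\log t+O(1)$ together with the known Poisson--Dirichlet correlation formula (your dyadic blocking of $n$ is a cosmetic variant of the paper's truncation to $n>x^{1-\epsilon}$). The one substantive simplification you missed is that your small-prime diagonal case is vacuous: since $\supp\eta$ is a compact subset of the open orthant $\{y_i>0\ \forall i\}$, the function $\eta$ vanishes identically whenever some coordinate is at most a fixed $\alpha>0$, so every prime occurring in the sum already exceeds $n^{\alpha}$ and the modulus-of-continuity argument for $p<x^{\delta}$, while not incorrect, is unnecessary.
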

	
    Here and throughout the paper we adopt the convention that $\R_+ = (0,\infty)$. So $\eta$ being supported in $\R_+^k$ means that $\eta(y_1,...,y_k)$ will vanish when any $y_i$ is sufficiently close to $0$. (Recall that the support of a function is the \emph{closure} of the set on which it does not vanish.)

    \begin{remark}
    In Theorem \ref{thm:poisson_dirichlet} and Lemma \ref{lem:corr_func} and in other places in this paper it should be possible to adopt a weaker definition of level of distribution, in which the bound \eqref{eq:distribution-eqn} need only hold for sums over $d$ in which $\Omega(d) \leq k$ and all prime factors of $d$ are larger than $x^\epsilon$, with implicit constants depending on $k$ and $\epsilon$, for all $k \geq 1$ and $\epsilon > 0$, but we do not pursue this generalization here.
    \end{remark}
	
	In fact, the right hand side in \eqref{eq:partial_corr} has a simple evaluation in general: for any continuous $\eta$ {with compact support in $\mathbb{R}_+^k$},
	\begin{equation}
		\label{eq:PD_correlations}
		\EE \sum_{\substack{j_1,...,j_k \\ \textrm{distinct}}} \eta(L_{j_1},...,L_{j_k}) = \int_{\R_+^k} \frac{\mathbf{1}[t_1+\cdots + t_k \leq 1]}{t_1\cdots t_k} \eta(t)\, d^kt.
	\end{equation}
	This is \cite[(4)]{Ta} (see also the closely related \cite[(14)]{ArKoMi}).
	
	Theorem \ref{thm:poisson_dirichlet} will be seen to follow from Lemma \ref{lem:corr_func} and a characterization of the Poisson-Dirichlet process due to Arratia-Kochman-Miller \cite{ArKoMi}.

    \begin{remark}
    It may be worthwhile for number theorists unfamiliar with correlation sums to write out an explicit example of the sort of sum which appears on the {left} hand side of \eqref{eq:partial_corr}. For instance if $u = p_1 p_2 p_3$ with $p_1 \geq p_2 \geq p_3$ all primes then we have for the $2$-point correlation sum,
    \begin{align*}
    \sum_{\substack{j_1,j_2 \\ \textrm{distinct}}} \eta\Big(\frac{\log p_{j_1}}{\log u}, \frac{\log p_{j_2}}{\log u}\Big) = &\eta\Big(\frac{\log p_1}{\log u}, \frac{\log p_2}{\log u}\Big) + \eta\Big(\frac{\log p_1}{\log u}, \frac{\log p_3}{\log u}\Big) \\
    &+\eta\Big(\frac{\log p_2}{\log u}, \frac{\log p_1}{\log u}\Big)+ \eta\Big(\frac{\log p_2}{\log u}, \frac{\log p_3}{\log u}\Big)\\
    &+ \eta\Big(\frac{\log p_3}{\log u}, \frac{\log p_1}{\log u}\Big) + \eta\Big(\frac{\log p_3}{\log u}, \frac{\log p_2}{\log u}\Big).
    \end{align*}
    We have adopted the convention for such a $u$ that $p_j=1$ for $j\geq 4$, but because of the support of $\eta$ such terms do not appear in this sum. Note that the sum is symmetric in $p_1, p_2$ and $p_3$.
    
    The left hand side of \eqref{eq:partial_corr} will then be an average over $u$ of sums of this type. 
	
    Lemma \ref{lem:corr_func} has a surface-level resemblance to results that can be proven about zeros of $L$-functions. The reader unfamiliar with correlation sums as occur in the Lemma may consult \cite[Ch.1]{HoKrPeVi} for a general introduction and further information.      
    \end{remark}
	
	Lemma \ref{lem:corr_func} gives information about prime divisors of intermediate size, but because of restrictions on the support of $\eta$ it does not entail an asymptotic formula for the distribution of the largest prime factor of $u$. Our last result shows that even this partial information about the level of distribution entails an upper bound for how often the largest prime factor can be especially large.
	
	\begin{theorem}
		\label{thm:upperbound}
		If $(a_n)$ is {$\sigma$ well-distributed for some $\sigma > 0$}, then for any $\epsilon > 0$,
		$$
		\limsup_{x\rightarrow\infty}\PP(P^+(u) \geq u^{1-\epsilon}) \ll \epsilon,
		$$
		where the implicit constant depends on the sequence $(a_n)$.
	\end{theorem}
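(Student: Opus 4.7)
The plan is to bound
\[
S(x) := \sum_{\substack{n \leq x \\ P^+(n) \geq n^{1-\epsilon}}} a_n
\]
by $O(\epsilon N(x))$ by means of an upper bound sieve applied through the complementary small factor. The trivial estimate $\PP \leq 1$ handles $\epsilon \geq \vartheta/4$ (with implicit constant $4/\vartheta$), so I may assume $\epsilon < \vartheta/4$. Then for each $n$ counted in $S(x)$ there is a unique prime $p = P^+(n) \geq n^{1-\epsilon}$, and the complementary factor $m = n/p$ satisfies $m \leq n^\epsilon \leq x^\epsilon$. This gives
\[
S(x) \leq \sum_{m \leq x^\epsilon} \sum_{\substack{p \text{ prime} \\ pm \leq x}} a_{pm}.
\]

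For each fixed $m$, I would apply an upper bound sieve (Selberg's, say) to the sequence $b_k := a_{mk}$ for $k \leq x/m$. The level of distribution of $(a_n)$ transfers, since $\sum_{k \leq x/m,\, d \mid k} b_k = N_{md}(x)$, and \eqref{eq:distribution-eqn} controls $N_{md}(x) - g(md) N(x)$ at moduli $md \leq x^{\vartheta - \eta}$ for any small $\eta > 0$. Taking sieve level $D = x^{\vartheta - 2\epsilon}$ and sifting parameter $z = D^{1/2}$, the Mertens-type product $\prod_{p < z}(1 - g(p)) \asymp 1/\log z \asymp 1/\log x$ (deducible from the first identity in \eqref{eq:g_asymp}) should give
\[
\sum_{\substack{k \leq x/m \\ (k,\, \prod_{q < z} q)=1}} b_k \ll \frac{g(m) N(x)}{\log x},
\]
with implicit constant independent of $m$. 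To convert this into a bound for prime $k = p$, I would discard the contribution of $n \leq x^{\vartheta/2}$, which is at most $N(x^{\vartheta/2}) = o(N(x))$ by regularity; for $n > x^{\vartheta/2}$, the condition $p \geq n^{1-\epsilon} > x^{(1-\epsilon)\vartheta/2}$ forces $p > z$ when $\epsilon < \vartheta/4$, so the primality indicator is bounded above by the sifting indicator.

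Summing over $m \leq x^\epsilon$ requires the sharper estimate $\sum_{m \leq Y} g(m) \asymp \log Y$ (going beyond the $(\log Y)^A$ bound of Lemma \ref{lem:estimate-C}), which follows from the precise Mertens hypothesis in \eqref{eq:g_asymp} via the Selberg--Delange method applied to the Dirichlet series $\sum g(n)/n^s$. Then
\[
S(x) \ll \frac{N(x)}{\log x} \sum_{m \leq x^\epsilon} g(m) + o(N(x)) \ll \epsilon N(x) + o(N(x)),
\]
and dividing by $N(x)$ and taking $\limsup$ gives the theorem.

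The main technical obstacle is the sieve step: one must check that the Selberg error $\sum_{d \leq D} |N_{md}(x) - g(md) N(x)|$, summed over $d$ with implicit divisor-function weights, remains negligible uniformly in $m \leq x^\epsilon$ (using \eqref{eq:distribution-eqn} at modulus $md \leq x^\vartheta$), and that the correction factors $\prod_{p \mid m}(1 - g(p))^{-1}$ arising from primes dividing $m$, where the sieve density is slightly perturbed, are absorbed into the constant; local uniformity is the natural tool for these perturbations.
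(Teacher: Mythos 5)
Your decomposition $n=pm$ with $p=P^+(n)$ and a sieve applied to each cofactor class $m\le x^{\epsilon}$ is the classical Erd\H{o}s--Goldfeld route, and it is genuinely different from the paper's argument; but as written it has a gap at the step that actually produces the factor $\epsilon$, namely the claim that $\sum_{m\le Y} g(m)\asymp \log Y$. This does not follow from the hypotheses. The condition \eqref{eq:g_asymp} controls $g$ only at primes; at prime powers the only available bound is $g(p^k)\ll C^{\Omega(p^k)}/p^k=(C/p)^k$, which is vacuous for the finitely many primes $p<C$. For such a prime one can have $g(p^k)=1$ for every $k$ consistently with all of \eqref{eq:g_asymp} (e.g.\ $g(2^k)=g(3^k)=1$ for all $k$ and $g(q^k)=q^{-k}$ otherwise), and then $\sum_{m\le Y}g(m)\ge \#\{2^a3^b\le Y\}\gg(\log Y)^2$. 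The Selberg--Delange method does not rescue this: the Euler factor $\sum_k g(p^k)p^{-ks}$ at such a prime contributes an extra pole, and the method would return $(\log Y)^{\gamma}$ with $\gamma>1$, at which point your final bound becomes $\epsilon^{\gamma}(\log x)^{\gamma-1}N(x)$, which is useless. The same obstruction blocks the fallback of replacing $g(m)N(x)$ by $N_m(x)$ and invoking local uniformity, since $\sum_{m\le Y}C^{\Omega(m)}/m$ is again only $O((\log Y)^{A})$ by Lemma \ref{lem:estimate-C}. The remaining issues you flag (uniformity of the sieve error over $m$, with moduli $md\le x^{\vartheta-\epsilon}$ and divisor weights handled by Cauchy--Schwarz plus local uniformity, and the perturbed densities at $p\mid m$, where $g(p)$ may equal $1$ so that $(1-g(p))^{-1}$ must be excluded from the sieve rather than ``absorbed'') are real but repairable; the $\sum_m g(m)$ step is the one that fails under the stated hypotheses.

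The paper avoids this entirely by not decomposing $n$ at all: it observes that $P^+(n)\ge n^{1-\epsilon}$ forces $n$ to have no prime factor strictly between $n^{\epsilon}$ and $n^{1-\epsilon}$, and applies the Selberg upper bound sieve once, to the original sequence $(a_n)$, sifting by the primes in $(x^{\epsilon},x^{\delta_0})$ for a small fixed $\delta_0$ dictated by the level of distribution. The factor $\epsilon$ then comes from the Mertens product $\prod_{x^{\epsilon}<p<x^{\delta_0}}(1-g(p))\ll\epsilon$, which involves $g$ only at primes, exactly where \eqref{eq:g_asymp} gives control; and the sieve error is a single sum over $d\le x^{\delta}$ handled by Cauchy--Schwarz, local uniformity, and the level of distribution. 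If you want to salvage your route, you would need an additional hypothesis (or an additional argument) controlling $g$ on prime powers of small primes, e.g.\ $\sum_{m\le Y}g(m)\ll\log Y$, which holds for the motivating example $g=1/\phi$ but not in the paper's generality.
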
 
	
	We note that this is an essentially optimal result, since for sequences with level of distribution $1$ by Theorem \ref{thm:poisson_dirichlet} we have $\PP(P^+(u) \geq u^{1-\epsilon}) \sim 1- \rho(1/(1-\epsilon))$, and for $\epsilon$ small enough that $1\leq 1/(1-\epsilon) \leq 2$ we have $1- \rho(1/(1-\epsilon)) = \log(1/(1-\epsilon)) \approx \epsilon$ (see \cite[(7.10)]{MoVa} for the evaluation of $\rho$ in this range).
	
	In the case of sampling shifted primes $p-1 \leq x$, Theorem \ref{thm:upperbound} recovers the following corollary,
 
 	\begin{corollary}[Erd\H{o}s]
		\label{cor:shiftedprimes}
		For any $\epsilon > 0$,
		$$
		\limsup_{x\rightarrow\infty} \frac{1}{\pi(x)}|\{p \leq x: P^+(p-1) \geq p^{1-\epsilon}\}| \ll \epsilon.
		$$
	\end{corollary}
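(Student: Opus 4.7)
The plan is to apply Theorem \ref{thm:upperbound} directly to the shifted-prime sequence and then translate the resulting probability bound into the counting statement of the corollary. By Proposition \ref{prop:shiftedprimes_dist} applied with $a = 1$, the indicator sequence $a_n = \mathbf{1}[n+1 \text{ is prime}]$ is regular, has level of distribution $\vartheta = 1/2$, and is locally uniform, so every hypothesis of Theorem \ref{thm:upperbound} is satisfied.

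With this choice, $N(x) = \pi(x+1) - 1$ and the random variable $u$ of Section \ref{subsec:mainresults} is uniformly distributed on $\{p - 1 : p \text{ prime},\, 2 \leq p \leq x+1\}$. Theorem \ref{thm:upperbound} therefore gives
$$
\limsup_{x\to\infty} \PP(P^+(u) \geq u^{1-\epsilon}) \ll \epsilon,
$$
which, on unfolding the definition of $u$, means
$$
\limsup_{x\to\infty} \frac{|\{p \leq x+1 \text{ prime} : P^+(p-1) \geq (p-1)^{1-\epsilon}\}|}{\pi(x+1)} \ll \epsilon.
$$

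To finish, I would simply compare the thresholds $p^{1-\epsilon}$ and $(p-1)^{1-\epsilon}$. Since $(p-1)^{1-\epsilon} < p^{1-\epsilon}$, the set $\{p : P^+(p-1) \geq p^{1-\epsilon}\}$ is contained in $\{p : P^+(p-1) \geq (p-1)^{1-\epsilon}\}$; combined with $\pi(x+1) = \pi(x) + O(1)$, this yields the stated bound. There is no real obstacle: the corollary is essentially the specialization of Theorem \ref{thm:upperbound} to shifted primes, and the only work is the cosmetic bookkeeping of replacing $\pi(x+1)$ by $\pi(x)$ and $(p-1)^{1-\epsilon}$ by $p^{1-\epsilon}$, both of which disappear after taking the $\limsup$.
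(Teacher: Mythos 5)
Your proposal is correct and is exactly the argument the paper intends: the corollary is stated as the specialization of Theorem \ref{thm:upperbound} to the shifted primes via Proposition \ref{prop:shiftedprimes_dist} with $a=1$, and your handling of the threshold $(p-1)^{1-\epsilon}$ versus $p^{1-\epsilon}$ and of $\pi(x+1)$ versus $\pi(x)$ supplies the only bookkeeping needed. (The only nitpick is that $N(x)=\pi(x+1)$ rather than $\pi(x+1)-1$, since $p=2$ gives $n=1$; this is immaterial.)
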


    This result appears implicitly, though somewhat obscurely, in a paper of Erd\H{o}s (see the line beginning with ``the sum in $a$ is less than" in \cite[p. 213]{Er}). A recent paper of Ding \cite{Di2} gives a proof with explicit constants, and explains some of the history around this estimate.

	As in these other proofs of Corollary \ref{cor:shiftedprimes}, the proof of Theorem \ref{thm:upperbound} relies on an upper bound sieve.
	
    \subsection{Acknowledgements}

    We thank Yuchen Ding, Ofir Gorodetsky, Ram Murty, and anonymous referees for comments, suggestions and corrections to earlier versions of this manuscript. We have used ChatGPT 5.2 to proofread a version of the manuscript and to help format references but no parts of the paper were computer generated. B.R. is supported by an NSERC grant. A.B. is supported by a Coleman Postdoctoral Fellowship.
 
	\section{Resemblance to Poisson-Dirichlet: a proof of Theorem \ref{thm:poisson_dirichlet} and Lemma \ref{lem:corr_func}}

	\begin{proof}[Proof of Lemma \ref{lem:corr_func}]
        Let us rewrite the left hand side of \eqref{eq:partial_corr} as
        \begin{equation}
        \label{eq:partial_corr_sum}
        \frac{1}{N(x)} \sum_{n\leq x} a_n \sum_{\substack{j_1,...,j_k \\ \textrm{distinct}}} \eta\Big(\frac{\log p_{j_1}}{\log n},..., \frac{\log p_{j_k}}{\log n}\Big),
        \end{equation}
        where in the inner sum $p_1, p_2,...$ are the prime factors of $n$ listed according to multiplicity. We have given an expression for the right hand side of \eqref{eq:partial_corr} in the evaluation \eqref{eq:PD_correlations}. We will show convergence to this expression in the following steps: first we show that $\log n$ can be replaced by $\log x$ in the denominators above, second we show that those $n$ with repeated large prime factors contribute only a negligible amount to the sum, and third we show that the resulting expression can be expressed in more conventional language of analytic number theory and in that way easily evaluated.

        Let us note from the start there are constants {$a > 0$} and {$c < \sigma$} such that $\eta(y_1,...,y_k)$ vanishes whenever {$y_i \leq a$} or $y_1+\cdots+y_k \geq c$; this is because of the restricted support of $\eta$. 

        Thus in our first step we show that \eqref{eq:partial_corr_sum} is
        \begin{equation}
        \label{eq:corr_n_to_x}
        =\frac{1}{N(x)} \sum_{n\leq x} a_n \sum_{\substack{j_1,...,j_k \\ \textrm{distinct}}} \eta\Big(\frac{\log p_{j_1}}{\log x},..., \frac{\log p_{j_k}}{\log x}\Big) + o_{x\rightarrow\infty}(1).
        \end{equation}
        To see this, note that in \eqref{eq:partial_corr_sum} for each tuple $j_1,...,j_k$ for which the summand is non-vanishing, $p_{j_1},...,p_{j_k} \geq n^a$. But $n$ can have no more than $\lfloor 1/a \rfloor$ prime factors $p_j \geq n^a$. Thus we have the following crude bound: for any $n$,
        \begin{equation}
        \label{eq:crude_bound_corr}
        \Big| \sum_{\substack{j_1,...,j_k \\ \textrm{distinct}}} \eta\Big(\frac{\log p_{j_1}}{\log n},..., \frac{\log p_{j_k}}{\log n}\Big) \Big| \leq \max(|\eta|) \cdot k! \binom{\lfloor 1/a \rfloor}{k} = O(1),
        \end{equation}
        where the implicit constant depends on $k$ and $\eta$ but does not depend on $n$.

        Hence for arbitrary $\varepsilon > 0$, the left hand side of \eqref{eq:partial_corr_sum} is
        \begin{equation}
	\label{eq:truncated_sum}
        = \frac{1}{N(x)}\sum_{x^{1-\epsilon} < n \leq x} a_n       \sum_{\substack{j_1,...,j_k \\ \textrm{distinct}}} \eta\Big(\frac{\log  p_{j_1}}{\log n},..., \frac{\log p_{j_k}}{\log n}\Big)  + O\Big(\frac{N(x^{1-\epsilon})}{N(x)}\Big).
	\end{equation}
        {Due to continuity and compact support, $\eta$ is uniformly continuous. Hence for any $p_{j_1},...,p_{j_k}$ in the sum above, for $x^{1-\epsilon} < n \leq x$,
	\begin{align*}
        \eta\Big(\frac{\log p_{j_1}}{\log n},..., \frac{\log p_{j_k}}{\log n}\Big) &= \eta\Big(\frac{\log p_{j_1}}{\log x} +O(\epsilon),..., \frac{\log p_{j_k}}{\log x} +O(\epsilon)\Big) \\
        &=\eta\Big(\frac{\log p_{j_1}}{\log x},..., \frac{\log p_{j_k}}{\log x}\Big) + O(\delta(\epsilon)),
	\end{align*}
    for a quantity $\delta(\epsilon) \rightarrow 0$ as $\epsilon \rightarrow 0$.
        Thus \eqref{eq:truncated_sum} is
        \begin{multline}
        \label{eq:trunc_sum_with_x}
        = \frac{1}{N(x)}\sum_{x^{1-\epsilon} < n \leq x} a_n \sum_{\substack{j_1,...,j_k \\ \textrm{distinct}}} \eta\Big(\frac{\log p_{j_1}}{\log x},..., \frac{\log p_{j_k}}{\log x}\Big) \\
        + O\Big(\delta(\epsilon)\cdot\frac{N(x)-N(x^{1-\epsilon})}{N(x)}\Big) +  O\Big(\frac{N(x^{1-\epsilon})}{N(x)}\Big).
	\end{multline}
        This is because in both \eqref{eq:truncated_sum} and \eqref{eq:trunc_sum_with_x}, for each $n$ only a bounded number of tuples $j_1,...,j_k$ will give rise to a nonzero summand (there will be at most $k! \binom{\lfloor 1/a \rfloor}{k}$ such tuples, as in \eqref{eq:crude_bound_corr}), and in the the difference between such summands in \eqref{eq:truncated_sum} and \eqref{eq:trunc_sum_with_x} will be $O(\delta(\epsilon))$ always.

        An index $\alpha \geq \sigma > 0$ implies the error terms in \eqref{eq:trunc_sum_with_x} are $O(\delta(\epsilon))+ o_{x\rightarrow\infty}(1)$, and because $\delta(\epsilon)$ can be taken arbitrarily small, this implies \eqref{eq:corr_n_to_x}.}

        In our second step we show that in \eqref{eq:corr_n_to_x}, the sum over $n$ can be replaced by a sum only over those $n$ which have no repeated large prime factors. Let us consider the complementary set of $n$ which do have a repeated large prime factor; define $S(x)$ to be the set of positive integers $n \leq x$ such that in the prime factorization $n = p_1 p_2 \cdots$ some $p_i \in [x^a, x^c]$ occurs with multiplicity at least $2$. We have
        { 
        \begin{multline}
        \label{eq:repeated_bound}
        \sum_{n\in S(x)} a_n \leq \sum_{n\leq x} a_n \sum_{x^a \leq p \leq x^c} \mathbf{1}[\,p^2|n\,] = \sum_{x^a \leq p \leq x^c} N_{p^2}(x) \\
       \ll N(x) \sum_{x^a \leq p \leq x^c} \frac{1}{p^2} (\log x)^B + \log(x)^B \sum_{x^a \leq p \leq x^c} 1 \\
        = N(x) \log(x)^B \sum_{x^a \leq p \leq x^c} \frac{1}{p^2} + O_B\Big(x^c(\log x)^B\Big) = o(N(x)),
        \end{multline}
        where the second to last equation holds for some $B>0$ and follows from the congruence uniformity property \ref{PROP=Locally-Uniform}, and the last line follows from the prime number theorem and the assumption \ref{PROP=index} that the sequence has an index satisfying $\alpha \geq \sigma > c$.}
        
        Furthermore, observe that the crude bound \eqref{eq:crude_bound_corr} remains true if $\log n$ is replaced by $\log x$ in the denominators, for all $n\leq x$; the argument remains the same as in \eqref{eq:crude_bound_corr}. Hence using this estimate and \eqref{eq:repeated_bound} we see \eqref{eq:corr_n_to_x} is
        \begin{equation}
        \label{eq:corr_n_to_x_norepeated}
        =\frac{1}{N(x)} \sum_{\substack{n\leq x \\ n \notin S(x)}} a_n \sum_{\substack{j_1,...,j_k \\ \textrm{distinct}}} \eta\Big(\frac{\log p_{j_1}}{\log x},..., \frac{\log p_{j_k}}{\log x}\Big) + o_{x\rightarrow\infty}(1).
        \end{equation}

        Now coming to the third and final step, we observe that \eqref{eq:corr_n_to_x_norepeated} can be rewritten, 
        $$
        =\frac{1}{N(x)} \sum_{\substack{n\leq x \\ n \notin S(x)}} a_n \sum_{\substack{p_{j_1},...,p_{j_k} \\ \textrm{distinct}}} \eta\Big(\frac{\log p_{j_1}}{\log x},..., \frac{\log p_{j_k}}{\log x}\Big) + o_{x\rightarrow\infty}(1).
        $$
        The distinction between this sum and \eqref{eq:corr_n_to_x_norepeated} is that now the prime factors $p_{j_1},...,p_{j_k}$ must be distinct, whereas before we needed only that the indices $j_1,...,j_k$ be distinct. But because $n\notin S(x)$ in the sum and because of the support of $\eta$, these coincide whenever the inner summand is non-vanishing.

        But there is a one-to-one correspondence between tuples $(p_{j_1},...,p_{j_k})$ of prime factors of $n$, all distinct, and tuples $(q_1,...,q_k)$ of distinct primes in which $q_1\cdots q_k | n$. So we can further rewrite the above as
        \begin{equation}
        \label{eq:q_corr}
        =\frac{1}{N(x)} \sum_{\substack{n\leq x \\ n \notin S(x)}} a_n \sum_{\substack{q_1,...,q_k \\ \textrm{prime, distinct}}} \mathbf{1}\big[\,q_1\cdots q_k | n\,\big] \eta\Big(\frac{\log q_1}{\log x},\cdots ,\frac{\log q_k}{\log {x}}\Big) +o_{x\rightarrow\infty}(1).
        \end{equation}
        By the same argument as in \eqref{eq:crude_bound_corr},
        $$
        \sum_{\substack{q_1,...,q_k \\ \textrm{prime, distinct}}} \mathbf{1}\big[\,q_1\cdots q_k | n\,\big] \eta\Big(\frac{\log q_1}{\log x},\cdots ,\frac{\log q_k}{\log x}\Big) = O(1)
        $$
        uniformly in $n$. So we may use the bound \eqref{eq:repeated_bound} for contributions from $n\in S(x)$ to see that \eqref{eq:q_corr} is
        $$
        =\frac{1}{N(x)} \sum_{n\leq x } a_n \sum_{\substack{q_1,...,q_k \\ \textrm{prime, distinct}}} \mathbf{1}\big[\,q_1\cdots q_k | n\,\big] \eta\Big(\frac{\log q_1}{\log x},\cdots ,\frac{\log q_k}{\log x}\Big) +o_{x\rightarrow\infty}(1).
        $$
        But this is
        $$
        = \frac{1}{N(x)} \sum_{\substack{q_1,...,q_k \\ \textrm{prime, distinct}}} N_{q_1\cdots q_k}(x) \eta\Big(\frac{\log q_1}{\log x},\cdots ,\frac{\log q_k}{\log x}\Big) + o_{x\rightarrow\infty}(1).
	$$
        {We now simplify the above sum using} {that the level of distribution is $\vartheta \geq \sigma > c$}. Note that {the above} sum can be {rewritten} as 
        {$$
        = \frac{k!}{N(x)} \sum_{\substack{q_1<q_2<\dots<q_k \\ \textrm{prime, distinct}}} N_{q_1...q_k}(x) \eta\Big(\frac{\log q_1}{\log x},\cdots ,\frac{\log q_k}{\log x}\Big) + o_{x\rightarrow\infty}(1)
        $$}
        {Since $q_1...q_k \le x^c$, {one may think of the above sum as occurring over a subset of integers less than or equal to $x^c$}, and using \eqref{eq:distribution-eqn}, this simplifies to} \\
                
        \begin{equation}
        \label{eq:sum_with_g}
        =\sum_{\substack{q_1,...,q_k \\ \textrm{prime, distinct} }} g(q_1\cdots q_k) \eta\Big(\frac{\log q_1}{\log x},\cdots ,\frac{\log q_k}{\log x}\Big) + o_{x\rightarrow\infty}(1).
	    \end{equation}

        Observe that the upper bound in \eqref{eq:g_asymp} implies that for any $j\geq 2,$
        $$
        \sum_{\substack{q\geq x^a \\ \textrm{prime}}} g(q)^j = o_{x\rightarrow\infty}(1).
        $$
        Utilizing the multiplicativity of $g$ and then this bound, we see that \eqref{eq:sum_with_g} is
        \begin{multline}
	\label{eq:g_separated}
        =\sum_{\substack{q_1,...,q_k \\ \textrm{prime, distinct}}} g(q_1)\cdots g(q_k) \eta\Big(\frac{\log q_1}{\log x},\cdots,\frac{\log q_k}{\log x}\Big) + o_{x\rightarrow\infty}(1)  \\ =\sum_{\substack{q_1,...,q_k \\ \textrm{prime}}} g(q_1)\cdots g(q_k) \eta\Big(\frac{\log q_1}{\log x},\cdots, \frac{\log q_k}{\log x}\Big) + o_{x\rightarrow\infty}(1).
	\end{multline}

		But finally the asymptotic formula in \eqref{eq:g_asymp} and partial summation implies that if $\nu$ is the indicator function of an interval {$[A,B] \subset (0,\infty)$},
		$$
		\sum_{q\; \textrm{prime}} g(q)\, \nu\Big(\frac{\log q}{\log x}\Big) = \int_{\R_+} \frac{\nu(t)}{t}\, dt + {o_{x \to \infty}(1).}
		$$
		This implies that if $\eta$ is the indicator function of a rectangle $[A_1,B_1]\times\cdots\times[A_k,B_k] \subset (0,\infty)^k$, the {right} hand side of \eqref{eq:g_separated} is
		\begin{multline}
			\label{eq:final_asymp}
			=\left(\sum_{q_1 \textrm{ prime}} g(q_1)\mathbf{1}_{[A_1,B_1]}\Big(\frac{\log q_1}{\log x}\Big)\right)\cdots \left(\sum_{q_k\textrm{ prime}} g(q_k) \mathbf{1}_{[A_k,B_k]}\Big(\frac{\log q_k}{\log x}\Big)\right) + o_{x \to \infty}(1)\\
            = \int_{\R_+^k} \frac{1}{t_1\cdots t_k} \eta(t)\, d^kt + o_{x \to \infty}(1).
		\end{multline}
		But because linear combinations of such functions are dense in the space of continuous functions with compact support in $\R_+^k$, a standard approximation argument implies \eqref{eq:final_asymp} is true for this class of functions as well.
		
        This gives an asymptotic evaluation of the left hand side of \eqref{eq:partial_corr_sum}. Due to the restricted support of $\eta$, the indicator function in the expression \eqref{eq:PD_correlations} for Poisson-Dirichlet plays no role here, and we have therefore that the left hand side tends to the right hand side of \eqref{eq:partial_corr_sum}.
	\end{proof}
	
	We will verify Theorem \ref{thm:poisson_dirichlet} using the above lemma and the following criterion, from \cite[Lemma 2]{ArKoMi}:

	\begin{lemma}[Arratia-Kochman-Miller]
	\label{lem:akm}
		If for each $x$, $(L_1(x),L_2(x),...)$ is a random process with $L_1(x) \geq L_2(x) \geq \cdots \geq 0$ satisfying $\sum L_i(x) = 1$, and if for any collection of disjoint intervals $I_i = [a_i,b_i] \subset (0,1]$ with $b_1+\cdots + b_k < 1$ we have
		\begin{equation}
			\label{eq:AKM}
			\liminf_{x\rightarrow\infty} \EE \prod_{i=1}^k |\{j:\, L_j(x) \in I_i\}| \geq \prod_{i=1}^k \log(b_i/a_i) = \int _{I_1\times \cdots \times I_k} \frac{d^k t}{t_1\cdots t_k},
		\end{equation}
		then the process $L_1(x), L_2(x),...$ tends in distribution to the Poisson-Dirichlet process $L_1, L_2, ...$ as $x\rightarrow\infty$.
	\end{lemma}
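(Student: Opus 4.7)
The plan is to prove Lemma \ref{lem:akm} by combining the hypothesized lower bound on factorial moments with the almost-sure constraint $\sum_j L_j(x) = 1$ to pin down the finite-dimensional distributions of the limit process. First I would reformulate the hypothesis: for disjoint $I_1, \dots, I_k$ the left side of \eqref{eq:AKM} equals
$$
\EE \sum_{\substack{j_1, \dots, j_k \\ \text{distinct}}} \prod_{i=1}^k \ind[L_{j_i}(x) \in I_i],
$$
so the statement concerns the $k$-th factorial moment measure of the random point configuration $\{L_j(x)\}$. By \eqref{eq:PD_correlations}, the analogous quantity for the Poisson--Dirichlet process evaluates to $\prod_i \log(b_i/a_i)$ exactly when $b_1 + \dots + b_k < 1$ (the support condition makes the indicator $\ind[t_1+\cdots+t_k\le 1]$ equal to $1$ on $I_1\times\cdots\times I_k$), which matches the claimed lower bound. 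So the lemma asserts that a one-sided match of these factorial moments suffices to identify the limit.

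The next step, which is the main obstacle, is to turn the lim-inf hypothesis into matching upper bounds. The crux is the rigidity of total mass: $\sum_j L_j(x) = 1$ deterministically. Concretely, by approximating $t\mapsto t\cdot \ind[t\in[\alpha,1-\alpha]]$ from below by indicator functions of disjoint sub-intervals and applying the hypothesis to a fine partition, one gets a lower bound for $\EE \sum_j L_j(x)\,\ind[L_j(x)\in[\alpha,1-\alpha]]$. Direct estimation using $\sum_j L_j(x) = 1$ bounds the same quantity above by $1$, and a short computation shows the Riemann-sum lower bound saturates to $1-O(\alpha)$ as the partition is refined. Letting $\alpha\to 0$ rules out extra mass accumulating in $[0,\alpha]$ or $[1-\alpha,1]$ in the limit, and the same squeeze upgrades \eqref{eq:AKM} from $\liminf\ge$ to $\lim =$ for any compactly supported continuous test function on disjoint intervals with $b_1+\cdots+b_k < 1$.

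Finally, with matching limits for all multi-interval factorial moments against test functions supported away from $\{t_1+\cdots+t_k = 1\}$ and away from $0$, a standard moment-determinacy argument for point processes on $(0,1]$ (feasible because the Poisson--Dirichlet factorial moments $\prod_i \log(b_i/a_i)$ grow only polynomially in $k$) identifies the limiting finite-dimensional distributions with those of Poisson--Dirichlet. Tightness is automatic since every $L_j(x)\in[0,1]$, so this identification of limit moments yields convergence in distribution. The real technical work lives entirely in the second paragraph -- the one-sided hypothesis is too weak without exploiting the total-mass conservation, and making the saturation argument clean is the delicate part of the proof.
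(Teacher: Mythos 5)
First, a point of comparison: the paper does not prove this lemma at all --- it is imported verbatim as Lemma 2 of Arratia--Kochman--Miller \cite{ArKoMi} --- so your proposal has to be measured against their argument. Your overall strategy (read the left side of \eqref{eq:AKM} as a factorial moment of the point configuration $\{L_j(x)\}$, use the deterministic constraint $\sum_j L_j(x)=1$ to upgrade the one-sided $\liminf$ to a matching $\limsup$, then finish by moment determinacy, which is indeed trivial here since at most $1/\alpha$ of the $L_j(x)$ exceed $\alpha$) is the correct strategy and is essentially theirs.

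However, the central step --- the ``saturation'' in your second paragraph --- has a genuine gap for $k\geq 2$. For $k=1$ your squeeze does close: Riemann sums of the hypothesis give $\liminf_x \EE\sum_j L_j(x)\,\ind[L_j(x)\in[\alpha,1-\alpha]]\geq 1-2\alpha$, while $\sum_j L_j(x)=1$ gives the matching upper bound, and this pins down the first intensity as $dt/t$. But ``the same squeeze'' does not extend as stated: for general $k$ the natural deterministic upper bound is $\sum_{\mathrm{distinct}}L_{j_1}(x)\cdots L_{j_k}(x)\leq\bigl(\sum_j L_j(x)\bigr)^k=1$, whereas the Riemann-sum lower bound extracted from \eqref{eq:AKM} tends only to $\int_{t_1+\cdots+t_k\leq 1}d^kt=1/k!$; the two bounds do not meet, so no conclusion follows. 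To close the argument one must account for the diagonal (repeated-index) terms in the expansion of $\bigl(\sum_j L_j\bigr)^k$ over set partitions: each such term is a lower-order correlation weighted by powers $L_j^m$, each admits its own Riemann-sum lower bound from the hypothesis with a smaller $k'$, and for Poisson--Dirichlet these contributions sum to exactly $1$ --- only then does the squeeze saturate. (Equivalently, one can induct on $k$ via the conditional conservation identity $\sum_{j\notin\{j_1,\ldots,j_k\}}L_j=1-L_{j_1}-\cdots-L_{j_k}$, which is closer to what Arratia--Kochman--Miller actually do.) You would also need to make explicit the localization step --- a pointwise excess of the $k$-point intensity over $1/(t_1\cdots t_k)$ on some box would contradict the now-matching global integrals --- which you assert but do not carry out. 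In short, the proposal identifies the right ingredients, but the argument does not yet close precisely at the step you acknowledge as the crux.
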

	
	\begin{proof}[Proof of Theorem \ref{thm:poisson_dirichlet}]
		Let $L_i(x) = \log p_i/\log u$. If $\eta^\ast = \mathbf{1}_{I_1\times \cdots \times I_k}$ were continuous this would be implied by Lemma \ref{lem:corr_func}, as
		$$\prod_{i=1}^k |\{j:\, L_j(x) \in I_i\}| = \sum_{\substack{j_1,...,j_k \\ \textrm{distinct}}} \eta^\ast\Big(\frac{\log p_{j_1}}{\log u},..., \frac{\log p_{j_k}}{\log u}\Big).$$
		But for any $\epsilon > 0$, we can find a continuous function $\eta^-$ with support in $\{y\in \R_+^k:\, y_1 + \cdots + y_k < 1\}$ such that 
		$$\eta^\ast \geq \eta^-$$ 
		and
		$$\int_{\R_+^k} (\eta^\ast - \eta^-) \, \frac{d^k t}{t_1\cdots t_k} \leq \epsilon.$$
		{(Indeed, if a box $I_1^-\times  \cdots \times I_k^-$ is inside $I_1\times \cdots \times I_k$ and only slightly smaller, a continuous function $\eta^-$ sandwiched between indicator functions will satisfy these conditions.)} Thus lower-bounding $\eta^\ast$ by $\eta^-$, {applying Lemma \ref{lem:corr_func} for $\eta^-$, and then using the correlation function formula \eqref{eq:PD_correlations}}, we have
		$$\liminf_{x\rightarrow\infty} \EE \sum_{\substack{j_1,...,j_k \\ \textrm{distinct}}} \eta^\ast\Big(\frac{\log p_{j_1}}{\log u},..., \frac{\log p_{j_k}}{\log u}\Big) \geq \int_{\R_+^k} \eta^- \frac{d^k t}{t_1\cdots t_k}.$$
		
		But the right hand side is within $\epsilon$ of 
		$$\int \eta^\ast \frac{d^k t}{t_1\cdots t_k} = \int_{I_1\times \cdots I_k} \frac{d^kt}{t_1\cdots t_k},$$
		and because $\epsilon$ is arbitrary this verifies \eqref{eq:AKM} is true in this case, and the result follows. 
	\end{proof}

	\begin{remark}
	In effect, what Lemma \ref{lem:akm} of Arratia-Kochman-Miller shows is that if the result of Lemma \ref{lem:corr_func} holds for $\sigma = 1$ for some ordered process, then that process tends in distribution to the Poisson-Dirichlet process. That is, the convergence of correlation functions implies convergence in distribution in this context.
	\end{remark}

	\section{Upper bounds on largest primes: a proof of Theorem \ref{thm:upperbound}}
	
	In proving Theorem \ref{thm:upperbound} we will use the Selberg upper bound sieve. We recall the setup from \cite{FrIw}. 
	
	Let $\mathcal{P}$ be a finite set of primes, and define $P = \prod_{p \in \mathcal{P}}p$. Let $g(d) \in [0,1)$ be defined for $d|P$ and be a multiplicative function for this set of $d$. In the notation \eqref{eq:N_def} as before, define 
	$$
	r_d = N_d(x) - g(d)N(x),
	$$ 
	and suppose there are constants $\kappa, K > 0$ such that
	\begin{equation}
		\label{eq:upperbound_assumption}
		\prod_{\substack{w \leq p < z \\ p \in \mathcal{P}}} \frac{1}{1-g(p)} \leq K \Big(\frac{\log z}{\log w}\Big)^\kappa,
	\end{equation}
	for all $z > w \geq 2$.
	
	\begin{theorem}[An explicit upper bound sieve]
		\label{thm:sieve}
		For $\mathcal{P}$, $P$, $g(d)$, and $r_d$ as just described, with $g$ satisfying \eqref{eq:upperbound_assumption}, define $k = \kappa + \log K$. If a parameter $D$ is chosen such that all $p \in \mathcal{P}$ satisfy $p < D^{1/(4k)}$, then
		$$
		\sum_{\substack{n\leq x \\ (n,P)=1}} a_n \leq C\cdot V \cdot N(x) + \sum_{\substack{d|P \\ d < D}} \tau_3(d) |r_d|,
		$$
		where $C > 0$ is a constant which depends only on $k$, we have defined 
		$$
		V = \prod_{p \in \mathcal{P}} (1-g(p)),
		$$
		and $\tau_3(d) = \sum_{d_1 d_2 d_3 = d} 1$ is the threefold divisor function.
	\end{theorem}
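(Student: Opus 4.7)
The plan is to apply Selberg's $\Lambda^2$ upper bound sieve in its standard form. Let $\lambda_d$ be real weights supported on squarefree divisors $d \mid P$ with $d < \sqrt{D}$, normalized so that $\lambda_1 = 1$; these will be chosen optimally in the main-term analysis below. The pointwise inequality $\mathbf{1}[(n,P) = 1] \leq (\sum_{d \mid (n,P)} \lambda_d)^2$ holds for every $n$. Multiplying by $a_n$, summing over $n \leq x$, and expanding the square gives
\[
\sum_{\substack{n \leq x \\ (n,P)=1}} a_n \;\leq\; \sum_{d_1,d_2} \lambda_{d_1} \lambda_{d_2} N_{[d_1,d_2]}(x).
\]
Writing $N_{[d_1,d_2]}(x) = g([d_1,d_2]) N(x) + r_{[d_1,d_2]}$ decomposes the right side into a main term $N(x) \cdot Q(\lambda)$, with $Q(\lambda) = \sum_{d_1,d_2} \lambda_{d_1} \lambda_{d_2} g([d_1,d_2])$, and a remainder term in the $r_d$.

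For the main term, the multiplicativity of $g$ gives $g([d_1,d_2]) = g(d_1)g(d_2)/g((d_1,d_2))$, and introducing the multiplicative function $h$ defined on squarefree divisors of $P$ by $h(p) = g(p)/(1-g(p))$, the standard Selberg change of variables (as in \cite{FrIw}) diagonalizes $Q(\lambda)$ and shows that its minimum over admissible $\lambda$ with $\lambda_1 = 1$ equals $1/H(\sqrt{D})$, where
\[
H(z) = \sum_{\substack{d \mid P,\; d < z \\ d\;\text{squarefree}}} h(d).
\]
The explicit formula for the optimizing $\lambda_d$ yields the classical bound $|\lambda_d| \leq 1$. The remainder is then dominated in absolute value by $\sum_{d_1,d_2} |r_{[d_1,d_2]}|$; grouping by $d = [d_1,d_2]$, the number of ordered squarefree pairs $(d_1,d_2)$ with $[d_1,d_2] = d$ is exactly $\tau_3(d)$, since each prime $p \mid d$ must be placed in $d_1$ only, $d_2$ only, or both. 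Combined with $d_1,d_2 < \sqrt{D} \Rightarrow d < D$, this yields exactly the claimed error term $\sum_{d \mid P,\, d < D} \tau_3(d)|r_d|$.

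The remaining step, which I expect to be the main obstacle, is to show $H(\sqrt{D}) \geq c/V$ for a positive constant $c$ depending only on $k = \kappa + \log K$. Since $H(\infty) = \prod_{p \in \mathcal{P}}(1 + h(p)) = 1/V$, this amounts to bounding the Rankin tail $T := H(\infty) - H(\sqrt{D})$ by a constant (depending on $k$) fraction of $1/V$. For any $s > 0$,
\[
T \;\leq\; D^{-s/2}\sum_{\substack{d \mid P \\ d\;\text{sqfree}}} h(d)\, d^s \;=\; D^{-s/2}\prod_{p \in \mathcal{P}}(1 + p^s h(p)),
\]
and the identity $(1 + p^s h(p))/(1+h(p)) = 1 + (p^s - 1) g(p)$ gives $T/H(\infty) \leq D^{-s/2}\prod_{p \in \mathcal{P}}(1 + (p^s - 1) g(p))$. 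The smallness hypothesis $p < D^{1/4k}$ enters precisely here: choosing $s = c_0/\log D$ with $c_0$ of order $k$ ensures $p^s = O(1)$ uniformly over $\mathcal{P}$, whence $p^s - 1 \ll s \log p$ and $\log \prod(1 + (p^s-1) g(p)) \ll s \sum_{p \in \mathcal{P}} g(p) \log p$. The dimension condition \eqref{eq:upperbound_assumption} combined with partial summation bounds $\sum_{p < D^{1/4k}} g(p) \log p$ by $\kappa \log D/(4k) + O(\log K)$, so that together with the decay factor $D^{-s/2} = e^{-c_0/2}$ we obtain $T/H(\infty) \leq c' < 1$ for a constant $c'$ depending only on $k$. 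This gives $H(\sqrt{D}) \geq (1-c')/V$, and inserting this into the main-term bound alongside the error-term estimate completes the proof with $C = 1/(1-c')$. The delicate piece is the calibration of the Rankin parameter $s$: one must balance the smallness hypothesis against the dimension condition so as to extract a constant depending on $k$ alone, and this is the step where the hypothesis $p < D^{1/4k}$ is used in an essential way.
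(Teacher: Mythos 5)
Your first two paragraphs are the standard Selberg $\Lambda^2$ setup and are fine; note, though, that the paper does not reprove any of this --- its ``proof'' of Theorem \ref{thm:sieve} is a one-line citation of Theorem 7.4 of Friedlander--Iwaniec (with $s=4k$, $X=N(x)$), so what you are attempting is a from-scratch reproof of that result. The crux is exactly where you say it is, and that is where your argument has a genuine gap. The claim that \eqref{eq:upperbound_assumption} plus partial summation gives $\sum_{p<D^{1/4k}} g(p)\log p \leq \kappa \log D/(4k) + O(\log K)$ is false. The hypothesis \eqref{eq:upperbound_assumption} loses a factor $K$ on \emph{every} range $[w,z)$, so decomposing $[2,z)$ into ranges $[z^{\lambda^{-(j+1)}},z^{\lambda^{-j}})$ the best one gets is
$$
\sum_{\substack{p<z\\ p\in\mathcal P}} g(p)\log p \;\leq\; \frac{\lambda}{\lambda-1}\bigl(\log K+\kappa\log\lambda\bigr)\log z,
$$
and this is essentially sharp: a $g$ that places mass $\approx \log K$ on primes near $z^{e^{-j}}$ for each $j$, on top of the density $\kappa\,dt/(t\log t)$, is consistent with \eqref{eq:upperbound_assumption} and has $\sum_{p<z}g(p)\log p$ of size $\approx(\kappa+\log K)\log z = k\log z$. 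With $z=D^{1/4k}$ this is of order $\log D/4$, not $\kappa\log D/(4k)+O(1)$; for $K$ large the two differ by an unbounded factor.

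This matters because the margins in your Rankin step are razor thin. Since $p^s-1\geq s\log p$, your ``loss'' term satisfies $\sum_p (p^s-1)g(p) \geq s\sum_p g(p)\log p$, which can be as large as $\approx \tfrac{1}{4}s\log D$ times a correction factor $\geq 1$ coming from $(e^u-1)/u$ with $u=s\log D/(4k)$; the ``gain'' is only $\tfrac{1}{2}s\log D$. So the conclusion $T/H(\infty)\leq c'<1$ does not follow from what you wrote: it requires (i) the correct near-optimal form of the partial-summation bound above (e.g.\ $\lambda=e$ gives the constant $\tfrac{e}{e-1}k\approx 1.58k$), and (ii) choosing $u$ small enough that $\tfrac{1.58}{4}\cdot\tfrac{e^u-1}{u}<\tfrac12$, which holds only for $u$ below roughly $0.49$ --- this is precisely where the constant $4$ in the hypothesis $p<D^{1/4k}$ is consumed, and a sketch that treats $c_0$ as ``of order $k$'' and the product as harmlessly $O(1)$ does not establish it. The argument is salvageable along these lines (and then yields a saving in the exponent proportional to $k$, hence a constant $C$ depending only on $k$), but as written the key inequality $H(\sqrt D)\gg_k 1/V$ is asserted on the basis of an incorrect intermediate estimate. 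The cleanest fix is the one the paper takes: quote Friedlander--Iwaniec, whose proof of their Theorem 7.4 is engineered exactly to extract this lower bound on the diagonalized form under hypothesis \eqref{eq:upperbound_assumption} alone.
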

	
	\begin{proof}
		This is Theorem 7.4 in \cite{FrIw}, where in their notation we have taken $s = 4k$ and $X = N(x)$. (Note that the hypothesis of their theorem requires $g(d) \in (0,1)$ for $d|P$, but one may check the proof works with no modification if $g(d) \in [0,1)$ for $d|P$.)
	\end{proof}
	
	We now apply this result to get an upper bound on the frequency with which a number $n$ has a prime factor larger than $n^{1-\epsilon}$. {It is only when $\epsilon$ is small that Theorem \ref{thm:upperbound} is nontrivial so we may suppose with no loss of generality that $\epsilon < 1/2$.} The idea behind the proof is easy to state: if $n$ has a prime factor larger than $n^{1-\epsilon}$, it will have no prime factors in between $n^\epsilon$ and $n^{1-\epsilon}$. {(Since $\epsilon < 1/2$, we have $n^\epsilon < n^{1-\epsilon}$.)} The upper bound is obtained by sieving by (a subset of) such primes. 
	
	\begin{proof}[Proof of Theorem \ref{thm:upperbound}]
		Note that by \eqref{eq:g_asymp} there is a constant $z_0$ such that $g(p) \leq 1/2$ for all $p \geq z_0$. Further by \eqref{eq:g_asymp} we have,
		$$
		\prod_{z_0 \leq p < z}(1-g(p)) = \exp[-\log \log z + O(1)],
		$$
		so that under the hypothesis of Theorem \ref{thm:upperbound}, we have that \eqref{eq:upperbound_assumption} is satisfied for any subset $\mathcal{P}$ of primes larger than $z_0$, for $\kappa = 1$ and some constant $K$. As in Theorem \ref{thm:sieve} define $k = \kappa + \log K$.
		
		Let $\alpha > 0$ be the index. Using {$N(x^{1/2}) = x^{\alpha/2 + o(1)} = o(N(x))$, we have}
		\begin{equation}
			\label{eq:firstbound}
			\sum_{n\leq x} a_n \mathbf{1}[P^+(n) \geq n^{1-\epsilon}] = \sum_{x^{1/2} < n\leq x} a_n \mathbf{1}[P^+(n) \geq n^{1-\epsilon}] + {o_{x\rightarrow\infty}(N(x))}.
		\end{equation}
		But if $P^+(n) \geq n^{1-\epsilon}$ then $n$ is not divisible by any primes strictly in between $n^\epsilon$ and $n^{1-\epsilon}$. If $x^{1/2} < n \leq x$, then this {implies} such $n$ is not divisible by any primes strictly in between $x^\epsilon$ and $x^{(1-\epsilon)/2}$. 
		
		We will sieve out by a sparser set of primes even than these. Let $\delta > 0$ be some number smaller than $\sigma$. (So $(a_n)$ has level of distribution and index larger than $\delta$.)
		
		Let $D = x^\delta$ and then set $\delta_0 = \min[(1-\epsilon)/2, \delta/(4k)]$. Now define $\mathcal{P}$ to be the primes larger than $z_0$ and strictly in between $x^\epsilon$ and $x^{\delta_0}$. (Let $\mathcal{P}$ be empty if there are no such primes.) We have $\mathcal{P}$ is a subset of the primes in between $x^\epsilon$ and $x^{(1-\epsilon)/2}$, and also all $p\in \mathcal{P}$ satisfy $p < D^{1/4k}$. 
		
		Thus, if we set $P = \prod_{p\in\mathcal{P}}p$, the right hand side of \eqref{eq:firstbound} is
		\begin{align*}
			&\leq \sum_{\substack{n\leq x \\ (n,P)=1}} a_n + o_{x\rightarrow\infty}(N(x)) \\
			&\leq C' \cdot V \cdot N(x) + \sum_{\substack{d < D \\ d|P}} \tau_3(d)|r_d| + o_{x\rightarrow\infty}(N(x)),
		\end{align*}
		where $C'$ is a constant which depends only on the sequence $(a_n)$.
		
		Now note that for sufficiently small $\epsilon$, once $x$ is sufficiently large, the set $\mathcal{P}$ will not be empty. With no loss of generality we may assume $\epsilon$ is this small and $x$ is at least this large in the remainder of the proof.
		
		We have
		$$
		V = \prod_{x^\epsilon < p < x^{\delta_0}} (1-g(p)) = \exp[-\log \log(x^{\delta_0}) + \log \log(x^\epsilon) + O(1)] \ll \epsilon,
		$$
		Moreover, there are constants $B$ and $C$ such that
		\begin{equation}
		\label{eq:r_bound}
		|r_d| \ll \frac{(\log x)^B C^{\Omega(d)}}{d} N(x) + (\log x)^B C^{\Omega(d)} \ll \frac{(\log x)^B C^{\Omega(d)}}{d} N(x)
		\end{equation}
		for $d \leq D$. {(The first inequality follows from congruence uniformity, and the second from the index relation $N(x)/d \gg x^{\alpha+o(1)}/x^\delta \gg 1$.)} Taking such a constant $C$, we note
		\begin{equation*}
			\sum_{\substack{d < D \\ d|P}} \tau_3(d) |r_d| \leq \Big( \sum_{\substack{d\leq x^\delta \\ d|P}} \frac{C^{\Omega(d)}\tau_3(d)^2}{d}\Big)^{1/2} \Big( \sum_{\substack{d\leq x^\delta \\ d|P}} \frac{d}{C^{\Omega(d)}} |r_d|^2\Big)^{1/2}.
		\end{equation*}
		Note $\tau_3(n) \leq 3^{\Omega(n)}$ and for sufficiently large $x$ we have $(9C)^{\Omega(d)}/d \leq 1$ for all $d | P$. So using Lemma \ref{lem:estimate-C} to estimate the first parentheses and \eqref{eq:r_bound} to estimate the second, for sufficiently large $x$ the above is
		\begin{equation*}
			\ll (\log x)^A \Big( (\log x)^B N(x) \sum_{d\leq x^\delta} |r_d|\Big)^{1/2}
		\end{equation*}
		for some constant $A > 0$.

		Using that $(a_n)$ has level of distribution greater than $\delta$, the above is
		\begin{equation*}
		\ll N(x)/(\log x)^{A'} = o_{x\rightarrow\infty}(N(x)),
		\end{equation*}
		for any constant $A' > 0$.
		
		Putting matters together we have
		$$
		\sum_{n\leq x} a_n \mathbf{1}[P^+(n) \geq n^{1-\epsilon}] \ll \epsilon N(x) + o_{x\rightarrow\infty}(N(x)),
		$$
		where the implicit constant depends only on the sequence $(a_n)$, which implies the Theorem.
	\end{proof}

	\begin{remark}
	    Theorem \ref{thm:upperbound} says that the likelihood that $\log P^+(u)/\log u\geq 1-\epsilon$ is $O(\epsilon)$. Although in its proof we have imported Theorem \ref{thm:sieve} directly from sieve theory, it is likely possible and would be interesting to abstract the combinatorial content of this sieve bound to prove a version of Theorem \ref{thm:upperbound} for general point processes on the simplex with correlation functions known to agree with those of a Poisson-Dirichlet process against test functions with restricted support, in the sense of Lemma \ref{lem:corr_func}. We do not pursue this here however.
	\end{remark}

	\Addresses
	

\begin{thebibliography}{999}
		
		\bibitem{ArKoMi}
		R. Arratia, F. Kochman, and V.S. Miller. Extensions of Billingsley's Theorem via Multi-Intensities. \emph{arXiv preprint arXiv:1401.1555.}
		
		\bibitem{BaHa}
		R. Baker and G. Harman. The Brun-Titchmarsh theorem on average. \emph{Analytic number theory, Vol. 1 (Allerton Park, IL, 1995)}, 39--103, Progr. Math., 138, \emph{Birkhäuser Boston, Boston, MA}, 1996.
		
		\bibitem{BaSh}
		W.D. Banks and I.E. Shparlinski. On values taken by the largest prime factor of shifted primes. \emph{J. Aust. Math. Soc.} 82 (2007), no. 1, 133--147.
		
		\bibitem{Bi72}
		P. Billingsley. On the distribution of large prime divisors. \emph{Period. Math. Hungar.} 2 (1972), 283--289.
		
		\bibitem{Bi99}
		P. Billingsley. Convergence of probability measures. Second edition. Wiley Series in Probability and Statistics: Probability and Statistics. A Wiley-Interscience Publication. \emph{John Wiley \& Sons, Inc., New York}, 1999. x+277 pp.
		
		\bibitem{CoMu}
		A.C. Cojocaru and M.R. Murty. An introduction to sieve methods and their applications. London Mathematical Society Student Texts, 66. \emph{Cambridge University Press, Cambridge,} 2006. xii+224 pp.

        \bibitem{Da}
		C. Dartyge. Le probl\`eme de Tch\'ebychev pour le douzi\`eme polyn\^ome cyclotomique. \emph{Proc. London Math. Soc.} (3) 111 (2015), no. 1, 1--62.
		
		\bibitem{DaMaTe}
		C. Dartyge, G. Martin, and G. Tenenbaum. Polynomial values free of large prime factors. Period. Math. Hungar. 43 (2001), no. 1-2, 111--119.

        \bibitem{DaMa}
		C. Dartyge and J. Maynard. On the largest prime factor of quartic polynomial values: the cyclic and dihedral cases. \emph{J. Eur. Math. Soc.} (published online first, 2025).

        \bibitem{dlBr}
		R. de la Bret\`eche. Plus grand facteur premier de valeurs de polyn\^omes aux entiers. \emph{Acta Arith.} 169 (2015), no. 3, 221--250.

		
		\bibitem{dlBDr}
		R. de la Bret\`{e}che and S. Drappeau. Niveau de r\'{e}partition des polyn\^{o}mes quadratiques et crible majorant pour les entiers friables. \emph{J. Eur. Math. Soc. (JEMS)} 22 (2020), no. 5, 1577--1624.
		
		\bibitem{DeIw}
		J.-M. Deshouillers and H. Iwaniec. On the greatest prime factor of $n\sp{2}+1$. \emph{Ann. Inst. Fourier (Grenoble)} 32 (1982), no. 4, 1--11 (1983).
		
		
		\bibitem{KDi}
		K. Dickman. On the frequency of numbers containing prime factors of a certain relative magnitude. \emph{Arkiv f\"or Matematik, Astronomi och Fysik.} (1930) 22A (10): 1 - 14.
		
		\bibitem{Di}
		Y. Ding. On a conjecture on shifted primes with large prime factors. \emph{Arch. Math. (Basel)} 120 (2023), no. 3, 245--252.

        \bibitem{Di2}
        Y. Ding. On a conjecture on shifted primes with large prime factors, II. \emph{arXiv preprint 2402.09829} (2024)
		
		\bibitem{FeWu}
		B. Feng and J. Wu. On the density of shifted primes with large prime factors. \emph{Sci. China Math.} 61 (2018), no. 1, 83--94.
		
		\bibitem{Er}
        P. Erd\H{o}s. On the normal number of prime factors of $p-1$ and some related problems concerning Euler's $\phi$-function. \emph{Q. J. Math.} os-6 (1935) no. 1, 205--213.
		
		\bibitem{Fo}
		\'{E}. Fouvry. Théorème de Brun-Titchmarsh: application au théorème de Fermat. \emph{Invent. Math.} 79 (1985), no. 2, 383--407.
		
		\bibitem{FrIw} 
		J. Friedlander and H. Iwaniec. Opera de cribro. American Mathematical Society Colloquium Publications, 57. \emph{American Mathematical Society, Providence, RI}, 2010. xx+527 pp.
		
		\bibitem{Ge}
		A.O. Gelfond. Sur les nombres qui ont des propriétés additives et multiplicatives données. (French) \emph{Acta Arith.} 13 (1967/68), 259--265.
		
		\bibitem{Go}
		M. Goldfeld. On the number of primes $p$ for which $p+a$ has a large prime factor. \emph{Mathematika} 16 (1969), 23--27.
		
		\bibitem{Gr}
		A. Granville. Smooth numbers: computational number theory and beyond. \emph{Algorithmic number theory: lattices, number fields, curves and cryptography,} 267--323, Math. Sci. Res. Inst. Publ., 44, \emph{Cambridge Univ. Press, Cambridge}, 2008.

        \bibitem{He}
		D. R. Heath-Brown. The largest prime factor of $x^{3}+2$. \emph{Proc. London Math. Soc.} (3) 82 (2001), no. 3, 554--596.

		
		\bibitem{Ho67}
		C. Hooley. On the greatest prime factor of a quadratic polynomial. \emph{Acta Math.} 117 (1967), 281--299.
		
		\bibitem{Ho73}
		C. Hooley. On the largest prime factor of $p+a$. \emph{Mathematika} 20 (1973), 135--143.
		
		\bibitem{HoKrPeVi}
		J.B. Hough, M. Krishnapur, Y. Peres, B. Virág. Zeros of Gaussian analytic functions and determinantal point processes. University Lecture Series, 51. \emph{American Mathematical Society, Providence, RI}, 2009. x+154 pp.

        \bibitem{Ir}
		A. J. Irving. The largest prime factor of $x^{3}+2$. \emph{Acta Arith.} 171 (2015), no. 1, 67--80.

		
		\bibitem{IwKo}
		H. Iwaniec and E. Kowalski. Analytic number theory. American Mathematical Society Colloquium Publications, 53. \emph{American Mathematical Society, Providence, RI}, 2004. xii+615 pp.
		
		
		
		\bibitem{Ki}
		J.F.C. Kingman. Random discrete distributions. \emph{J. Roy. Statist. Soc. Ser. B} 37 (1975), 1--22.
		
		\bibitem{KnTr}
		D.E. Knuth and L. Trabb Pardo. Analysis of a simple factorization algorithm. \emph{Theoret. Comput. Sci.} 3 (1976/77), no. 3, 321--348.

        \bibitem{Le}
        E. S. Lee, Explicit Mertens' theorems for number fields. \emph{Bull. Aust. Math. Soc.} 108 (2023), no. 1, 169--172. 
		
		\bibitem{Ma}
		G. Martin. An asymptotic formula for the number of smooth values of a polynomial. \emph{J. Number Theory} 93 (2002), no. 2, 108--182.
		
		\bibitem{Me}
		J. Merikoski. On the largest prime factor of $n^2+1$. \emph{J. Eur. Math. Soc. (JEMS)} 25 (2023), no. 4, 1253--1284.
		
		\bibitem{MoVa}
		H. L. Montgomery and R.C. Vaughan. Multiplicative number theory. I. Classical theory. Cambridge Studies in Advanced Mathematics, 97. \emph{Cambridge University Press, Cambridge}, 2007. xviii+552 pp.

        \bibitem{Mo}
        A. Mounier. Un crible minorant effectif pour les entiers friables. \emph{arXiv preprint 2402.13198} (2024).

        \bibitem{Na}
        T. Nagel. G{\'e}n{\'e}ralisation d'un th{\'e}or{\`e}me de {Tchebycheff}, \emph{Journ. de Math. }, 8 (1921), no.{4}, 343--356.

        \bibitem{Nagell}
        T.~Nagell. Introduction to number theory. \emph{John Wiley \& Sons, Inc.}, New York; Almqvist \& Wiksell, Stockholm, 1951. 309~pp.



        \bibitem{NiZuMo}
        I. Niven, H. Zuckerman, H.L. Montgomery. An introduction to the theory of numbers. Fifth edition. \emph{John Wiley \& Sons}, Inc., New York, 1991. {\rm xiv}+529 pp.
		
		\bibitem{Po}
		C. Pomerance. Popular values of Euler's function. \emph{Mathematika} 27 (1980), no. 1, 84--89. 

		
		\bibitem{Sp}
		L. Spiegelhofer. The level of distribution of the Thue-Morse sequence. Compos. Math. 156 (2020), no. 12, 2560--2587. 
		
		
		
		\bibitem{Ta}
		T. Tao. The Poisson-Dirichlet process, and large prime factors of a random number. \url{https://terrytao.wordpress.com/2013/09/21/the-Poisson-Dirichlet-process-and-large-prime-factors-of-a-random-number/}

        \bibitem{Te}
		G. Tenenbaum. Sur une question d’Erd\H{o}s et Schinzel. II. \emph{Invent. Math.} 99 (1990), 215--224.

		
		\bibitem{Ve}
		A.M. Vershik. Asymptotic distribution of decompositions of natural numbers into prime divisors. (Russian) \emph{Dokl. Akad. Nauk SSSR} 289 (1986), no. 2, 269--272.
		
		\bibitem{Wa}
		Z. Wang. Autour des plus grands facteurs premiers d'entiers consécutifs voisins d'un entier criblé. \emph{Q. J. Math.} 69 (2018), no. 3, 995--1013.

        \bibitem{Wu}
        J. Wu. On shifted primes with large prime factors and their products. \emph{Arch. Math. (Basel)} 112 (2019), no. 4, 387--393. 
		
		
		
	\end{thebibliography}
\end{document}